\documentclass[12pt]{scrartcl}
\usepackage{cmap}
\usepackage{sansmath}
\usepackage[T1]{fontenc}
\usepackage[utf8]{inputenc}
\usepackage{lmodern}
\usepackage{mathtools,amssymb}
\usepackage{enumitem}
\usepackage[a4paper,margin=3cm]{geometry}
\usepackage{amsthm}
\usepackage[svgnames]{xcolor}
\usepackage{booktabs}
\usepackage{siunitx}
\usepackage{graphics}
\usepackage{stmaryrd}
\usepackage[section]{placeins}
\usepackage{csquotes}
\usepackage{upgreek}

\usepackage[unicode,hyperfootnotes=false,pdftex]{hyperref}
\usepackage{hyperxmp}
\usepackage{bookmark}

\hypersetup{%
        pdftitle={Maximum likelihood drift estimation for a threshold diffusion},
	pdfauthor={Antoine Lejay; Paolo Pigato},
        pdfdate={2019-03-28},
}

\usepackage[bibencoding=latin1,backend=bibtex,uniquename=init,maxnames=10,maxcitenames=2,defernumbers=true,sortcites=true,firstinits=true,style=numeric,sorting=nyt]{biblatex}
\addbibresource{biblio-estimator-dobm.bib}

% Figures E.pdf GT.pdf N1.pdf nonconsistent.pdf N0.pdf

%%% 
\newtheorem{proposition}{Proposition}
\newtheorem{lemma}{Lemma}

\newtheorem{data}{Data}
\theoremstyle{definition}
\newtheorem{definition}{Definition}
\theoremstyle{remark}

\newtheorem{remark}{Remark}
\newtheorem{notation}{Notation}

%%% From the mathtools package
% \Set pour un ensemble (avec \given)
\newcommand\given{\nonscript\:\delimsize\vert\nonscript\:\mathopen{}} 
% Define sets
\newcommand\SetSymbol[1][]{\nonscript\:#1\vert\nonscript\:\mathopen{}\allowbreak}
\DeclarePairedDelimiterX\Set[1]\{\}{\renewcommand\given{\SetSymbol[\delimsize]}#1}
% Define probabilités
\DeclarePairedDelimiterX\Prb[1](){\renewcommand\given{\SetSymbol[\delimsize]}#1}

% Positive and negative parts
\DeclarePairedDelimiterXPP{\ppart}[1]{}{\llparenthesis}{\rrparenthesis}{^+}{#1}
\DeclarePairedDelimiterXPP{\mpart}[1]{}{\llparenthesis}{\rrparenthesis}{^-}{#1}
\DeclarePairedDelimiterXPP{\pmpart}[1]{}{\llparenthesis}{\rrparenthesis}{^\pm}{#1}
\DeclarePairedDelimiter{\abs}{|}{|}
\DeclarePairedDelimiter{\Paren}{(}{)}
\DeclarePairedDelimiter\bra{\langle}{\rangle}

\newcommand{\convas}[1][]{\xrightarrow[#1]{\mathrm{a.s.}}}
\newcommand{\convp}[1][]{\xrightarrow[#1]{\PP}}
\newcommand{\convl}[1][]{\xrightarrow[#1]{\mathrm{law}}}

\newcommand{\LL}{\mathsf{L}}

\newcommand{\NN}{\mathbb{N}}
\newcommand{\RR}{\mathbb{R}}
\newcommand{\PP}{\mathbb{P}}
\newcommand{\EE}{\mathbb{E}}
\newcommand{\cC}{\mathcal{C}}
\newcommand{\cL}{\mathcal{L}}
\newcommand{\cCz}{\mathcal{C}_0}
\newcommand{\cG}{\mathcal{G}}
\newcommand{\cN}{\mathcal{N}}
\newcommand{\cT}{\mathcal{T}}
\newcommand{\cM}{\mathcal{M}}
\newcommand{\cR}{\mathcal{R}}
\newcommand{\cB}{\mathcal{B}}
\newcommand{\cF}{\mathcal{F}}
\newcommand{\ind}[1]{\mathbf{1}_{#1}}

\newcommand{\vd}{\,\mathrm{d}}
\newcommand{\dd}{\mathrm{d}}

\newcommand{\btrue}{b^{\mathrm{true}}}

% Mathematical operators
\DeclareMathOperator{\Dom}{Dom}
\DeclareMathOperator{\Hess}{Hess}
\DeclareMathOperator{\sgn}{sgn}

\def\eqlaw{\stackrel{\mathrm{law}}{=}}
\newcommand{\eqdef}{\mathbin{:=}}

\newcommand{\disbeta}{\upbeta}

{\begin{itemize}[leftmargin=1cm,noitemsep,label={$\star$}]}{\end{itemize}}

\begin{document}

\title{
Maximum likelihood drift estimation for a threshold diffusion
}

\author{Antoine Lejay\footnote{Université de Lorraine, CNRS, Inria, IECL, F-54000 Nancy, France; \texttt{Antoine.Lejay@univ-lorraine.fr}}
\ and
Paolo Pigato\footnote{Weierstrass Institute for Applied Analysis and Stochastics, Mohrenstrasse 39, Berlin, 10117, Germany; 
E-mail: \texttt{paolo.pigato@wias-berlin.de}}}

\date{\today}

\maketitle

\begin{abstract}
We study the maximum likelihood estimator of the drift parameters of a
stochastic differential equation, with both drift and diffusion coefficients
constant on the positive and negative axis, yet discontinuous at zero. 
This threshold diffusion is called drifted Oscillating Brownian motion.  

 For this continuously observed diffusion, the maximum likelihood estimator coincide with a quasi-likelihood estimator with constant diffusion term.
We show that this estimator is the limit, as observations become dense in time, of the (quasi)-maximum likelihood estimator based on discrete observations.

In long time, the asymptotic behaviors of
the positive and negative occupation times rule the ones of the estimators.

Differently from
most known results in the literature, we do not restrict ourselves to the
ergodic framework: indeed, depending on the signs of the drift, the process may
be ergodic, transient or null recurrent. For each regime, we establish whether
or not the estimators are consistent; if they are, we prove the convergence in
long time of the properly rescaled difference of the estimators towards a normal or
mixed normal distribution. These theoretical results are backed by numerical
simulations.
\end{abstract}

\bigskip

\noindent{\textbf{Keywords: }} Threshold diffusion, Oscillating Brownian motion, 
maximum likelihood estimator, null recurrent process, ergodic process, transient process, 
mixed normal distribution

\bigskip

\noindent{\textbf{AMS 2010: }} primary: 62M05; secondary: 62F12; 60J60.

\bigskip

\noindent{\textbf{Acknowledgement: }}
P. Pigato gratefully acknowledges financial support from ERC via Grant CoG-683164. The authors are grateful to an anonymous reviewer for their useful suggestions.

\newpage

%%%%%%%%%%%%%%%%%%%%%%%%%%%%%%%%%%%%%%%%%%%%%%%%%%%%%%%%%%%%%%%%%%%%%%

\section{Introduction}

We consider the process, called a \emph{drifted Oscillating Brownian motion} (DOBM),
which is the solution to the Stochastic Differential equation (SDE) 
\begin{equation}
    \label{eq:process}
    \xi_t=\xi_0+\int_0^t \sigma(\xi_s)\vd W_s+\int_0^t b(\xi_s)\vd s,
\end{equation}
with
\begin{equation}
    \label{parametersDOBM}
    \sigma(x)=\begin{cases}
	\sigma_+>0&\text{ if }x\geq 0,\\
	\sigma_->0&\text{ if }x<0
    \end{cases}
    \quad\text{and}\quad
    b(x)=\begin{cases}
	b_+\in \RR &\text{ if }x\geq 0,\\
	b_-\in \RR &\text{ if }x<0.
    \end{cases}
\end{equation}
The strong existence to \eqref{eq:process} follows for example
from the results of \cite{legall}. 
Separately on $\RR_+$ and $\RR_-$, the dynamics of such process is the one of a
Brownian motion with drift, with threshold and regime-switch at $0$, consequence of the discontinuity of the coefficients.

This model can be seen as an alternative to the model studied in
\cite{motaesquivel}, which is a continuous time version of the Self-Exciting
Threshold Autoregressive models (SETAR), a subclass of the TAR models
\cite{tong1983,tong2011}.

The practical applications of such processes are numerous. In finance, 
we show in \cite{lp1} that an exponential form of this process generalizes
the Black \& Scholes model in a way to 
model leverage effects. 
Moreover, the introduction of a piecewise constant
drift such as the one in \eqref{parametersDOBM} is a straightforward way to
produce a mean-reverting process, if $b_+<0$ and $b_->0$. In \cite{lp1}, we
find some empirical evidence on financial data that this may be the case. This
corroborates other studies with different models~\cite{Monoyios:2002dk,su2015,su2017,Poterba}.  
Option pricing for this model may be performed efficiently using analytic techniques
as in \cite{gairat,lipton:2018,liptonsepp}.

Still in finance, the solution to \eqref{eq:process} models other
quantities than stocks. In \cite{Gerber}, Eq.~\eqref{eq:process} with
constant volatility serves as a model for the surplus of a company after the
payment of dividends, which are payed only if the profits of the company are
higher than a certain threshold.  Similar threshold dividend pay-out strategies are
considered in \cite{at}. In these works, the behavior of the process at the
discontinuity is referred to as \textquote{refraction}. SETAR models 
have also applications to deal with transaction costs or regulator interventions \cite{Yadav:1994kma} and to interest and exchange rates~\cite{Clements:1999cx,interestrate}.

More general discontinuous drifts and volatilities arise in
presence of Atlas models and other ranks based models
\cite{ichiba2013}. 
SDE with discontinuous coefficients have also numerous applications
in physics \cite{SATTIN20083941,ramirez_2013}, meteorology \cite{hs}
and many other domains. 
%TODO give more examples

In \cite{su2015,su2017}, F.~Su and K.-S.~Chan study 
the asymptotic behavior of the quasi-likelihood estimator of a diffusion
with piecewise regular diffusivity and piecewise affine drift
with an unknown threshold. The quasi-likelihood they use is 
based on the Girsanov density where the diffusivity is replaced by~$1$.
In particular, they provide a hypothesis test to decide
whether or not the drift is affine or piecewise affine in the ergodic situation. 

In \cite{kutoyants2012}, Y. Kutoyants considers the estimation
of a threshold $r$ of a diffusion with a known or unknown drift switching at $r$.
His results are then specialized to Ornstein-Uhlenbeck type processes. 
Also this framework assumes that the diffusion is ergodic.

In the present paper, we first derive the maximum likelihood estimators for the
drift parameters $b_-$ and $b_+$ from continuous-time observations.  We then
write a quasi-maximum likelihood estimator, as in \cite{su2015,su2017}, and
show that, for a process $\xi$ as in \eqref{eq:process}, these two estimators
are actually equal.  Then, we derive the corresponding estimators based on
discrete observations, and show that as the observations become dense in time,
the estimator converges to the one based on continuous observations.

We study the asymptotic behavior of the maximum likelihood estimators as the time tends to infinity,
in order to derive some confidence intervals when available.  
This article completes \cite{LP}, where we estimate $(\sigma_-,\sigma_+)$
from high-frequency data. We use our estimators on 
financial historical data in \cite{lp1}.

Based on the maximum and quasi-maximum likelihood, our estimators of $b_\pm$ can be expressed as
\begin{equation*}
    \beta^\pm_T
    =
\pm \frac{(\pm\xi_T)\vee 0-(\pm\xi_0)\vee 0 -L_T(\xi)/2 }{Q^\pm_T },
\end{equation*}
where $Q^+_T$ (resp. $Q^-_T$) is the occupation time of the positive (resp.
negative) side of the real axis, and $L_T(\xi)$ is the symmetric local time of
$\xi$ at $0$.  The occupation times $Q^\pm_T$ can be estimated from discrete
observations of a trajectory of $\xi$ using Riemann sums.  We also propose a
new estimator of $L_T(\xi)$ from discrete observations, which can be
implemented without any previous knowledge of $\sigma_\pm$. Substituting in the
formula above $Q^\pm$ and $L_T(\xi)$ with these discrete counterparts, we
obtain the (quasi)-maximum likelihood estimators from discrete observations.
Since all these quantities converge to their continuous-time versions as the
observations become dense in time, the estimators for the drift coefficients
converge as well.

The long time asymptotic regime of the process depends on the
respective signs of the coefficients $(b_-,b_+)$. Using symmetries, 
this leads 5 different cases in which the process may be ergodic, 
null recurrent or transient and the estimators have different
asymptotic behaviors.
In some situations, 
the estimators are not convergent. In others, we establish 
consistency as well as Central Limit Theorems, 
with speed $T^{1/2}$ or $T^{1/4}$, depending again on the signs of $b_\pm$.
We summarize in Table~\ref{table:summary} the various asymptotic behaviors.
We are in a situation close to the one encountered by M.~Ben Alaya and
A.~Kebaier in \cite{benAlaya2012,alaya_kebaier_2013} for estimating square-root diffusions, where
several situations shall be treated. The works
\cite{su2015,su2017,kutoyants2012} mentioned above only consider ergodic
situations. Non-parametric estimation of the drift in the recurrent case is considered in \cite{bp}.

	\begin{table}
	    \centering
    \begin{tabular}{r r r c c  c }
	\toprule
	&&& $\beta_T^+ - b_+$  & $\beta_T^- - b_-$  \\
	\cmidrule[1pt]{4-5}
	\textbf{(E)} &  $b_+<0,b_->0$ & ergodic   & $\approx\frac{1}{\sqrt{T}}\sqrt{1-\frac{b_+}{b_-}} \sigma_+ \cN$  & $\approx\frac{1}{\sqrt{T}}\sqrt{1-\frac{b_-}{b_+}} \sigma_- \cN$ \\
	\midrule
	\textbf{(N0)}  & $b_+=0,b_-=0$& null recurrent  & $\frac{1}{\sqrt{T}}\beta_1^+$ & $\frac{1}{\sqrt{T}}\beta_1^-$ \\
	\midrule
	\textbf{(N1)} & $b_+=0,b_->0$& null recurrent  & $\approx\frac{1}{\sqrt{T}}\sigma_+ \cN^+$ & $\approx\frac{1}{T^{1/4}}\sigma_-\frac{\sqrt{b_-}}{\sqrt{\sigma_+}}\frac{\cN^-}{\sqrt{\abs{\cN}}}$ \\
	\midrule
	\textbf{(T0)}  & $b_+>0,b_-\geq 0$& transient  & $\approx\frac{1}{\sqrt{T}}\sigma_+ \cN$ & $\cR_{\mathbf{T0}}$ as $T\to\infty$\\ 
 \midrule
 \textbf{(T1)} & $b_+>0,b_-< 0$& transient   & $\approx\frac{1}{\sqrt{T}}\sigma_+ \cN$ & $\cR_{\mathbf{T1}}^+$ as $T\to\infty$ \\
	\bottomrule
    \end{tabular}
    \caption{
	\label{table:summary} 
		Asymptotic behavior of estimators, 
	where $\cN$, $\cN^+$ and $\cN^-$ are independent, unit Gaussian variables.
	The law of $(\beta^-_1,\beta^+_1)$ in case \textbf{(N0)}  is given in \eqref{explicitdensitynodrift}.
	The r.v.s $\cR_{\mathbf{T0}}$ and $\cR^+_{\mathbf{T1}}$ follow the law in \eqref{eq:dens:r}.
	 Results of both sides in \textbf{(T1)} are wrt to $\PP_+$ (cf. Proposition~\ref{prop:T1}), which intuitively can be thought as conditioning to the process diverging towards positive infinity.
}
\end{table}

Finally, we develop in Section~\ref{sec:wilk} a hypothesis test for the value of the drift
which is based on the Wilk's theorem, which relates asymptotically 
the log-likelihood to a~$\chi^2$ distribution with 2 degrees 
of freedom. Besides, we show in Section~\ref{sec:LAN} the Local Asymptotic Normality 
(LAN \cite{lecam53,lecam00}) and the Local Asymptotic Mixed Normality 
(LAMN \cite{jeganathan82a}) in the ergodic
case  and the null recurrent case with non vanishing drift. These LAN/LAMN
properties are related to the efficiency of the operators. 
The Wilk's as well as the LAN/LAMN properties are proved by combining
the quadratic nature of the log-likelihood with our martingale central limit
theorems.

\bigskip

\noindent\textbf{Outline.}  In Section~\ref{sec:mle}, we present 
the maximum and quasi-maximum likelihood estimators, which are based on the Girsanov 
transform, and their discrete-time versions. In Section~\ref{sec:regime}, we characterize the different 
regimes of the process accordingly to the signs of the drifts. 
Our main results are presented in Section~\ref{sec:CLT}.
The limit theorems that we use are presented in Section~\ref{sec:aux}.
The proofs for each cases are detailed in Section~\ref{sec:proofs}. 
We present the Wilk theorem and the LAN/LAMN property in Section~\ref{sec:loglik}. 
Finally, in Section \ref{simulations}, we conclude this article with numerical experiments.

%%%%%%%%%%%%%%%%%%%%%%%%%%%%%%%%%%%%%%%%%%%%%%%%%%%%%%%%%%%%%%%%%%%%%%
%%%%%%%%%%%%%%%%%%%%%%%%%%%%%%%%%%%%%%%%%%%%%%%%%%%%%%%%%%%%%%%%%%%%%%
%%%%%%%%%%%%%%%%%%%%%%%%%%%%%%%%%%%%%%%%%%%%%%%%%%%%%%%%%%%%%%%%%%%%%%

%%%%%%%%%%%%%%%%%%%%%%%%%%%%%%%%%%%%%%%%%%%%%%%%%%%%%%%%%%%%%%%%%%%%%%
%%%%%%%%%%%%%%%%%%%%%%%%%%%%%%%%%%%%%%%%%%%%%%%%%%%%%%%%%%%%%%%%%%%%%%

\section{Maximum and quasi-maximum likelihood estimators} \label{sec:mle}

In this section, we construct two estimators for the parameters $(b_-,b_+)$ of the drift coefficient of $\xi$. The maximum likelihood estimator based on continuous time observations can be constructed if we assume to observe a continuous time path $(\xi_t)_{t\in[0,T]}$. A quasi-maximum likelihood estimator can also be constructed, and from the fact that $b(\cdot)$ and $\sigma(\cdot)$ are constant above and below the threshold, it follows that
these estimators are in fact the same one. Then, we construct the analogous estimator based on discrete time observations of the process. In the end we will see that, as observations become dense in time, the discrete time estimator converges to the one in continuous time.

\subsection{The maximum and quasi-maximum likelihood estimator based on continuous time observations}

First, we propose and discuss the maximum likelihood estimator from continuous time observations. 

\begin{data}
    \label{dat:1}
We observe a path $(\xi_t)_{t\in[0,T]}$
on the time interval $[0,T]$ of one solution to~\eqref{eq:process}, together
with its negative and positive occupation times 
\begin{equation}
    \label{eq:def:Q}
    Q^-_T=\int_0^T \ind{\xi_s<0}\vd s\text{ and } 
    Q^+_T=\int_0^T \ind{\xi_s>0}\vd s, 
\end{equation}
as well as its symmetric local time 
\begin{equation*}
    L_T(\xi)=\lim_{\epsilon\to 0}\frac{1}{2\epsilon}\int_0^T
    \ind{-\epsilon\leq \xi_s\leq \epsilon}\vd \bra{\xi}_s.
\end{equation*}
\end{data}

The question of the discretization of $Q^\pm_T$ as well as $L_T(\xi)$ 
is detailed in Sections~\ref{sec:disc} and ~\ref{sec:discrete}.

We define
\begin{align}
    \label{eq:def:M}
    M^\pm&\eqdef \int_0^\cdot \sigma_\pm\mathbf{1}_{\pm \xi_s\geq 0}\vd W_s
    \\
    \label{eq:def:R}
\text{and } 
    R^\pm_T&\eqdef \int_0^T \ind{\pm\xi_s\geq 0}\vd  \xi_s.
\end{align}
The quantities $M^\pm$ are are continuous time martingales    
with $\bra{M^\pm}=\sigma_\pm^2 Q^\pm$ and $\bra{M^+,M^-}=0$. Moreover, 
\begin{equation}
    \label{eq:RM}
R^\pm_T=M^\pm_T+b_\pm Q^\pm_T.
\end{equation}

Noticing that the occupation time is non decreasing, for the sake of simplicity we write
\begin{equation*}
    Q^\pm_\infty\eqdef\lim_{T\to\infty} Q_T^\pm\in \RR_+ \cup \{\infty\}.
\end{equation*}

The Girsanov weight of the distribution of \eqref{eq:process}
with respect the driftless ($b_+=b_-=0$) SDE is 
\begin{equation}
    \label{eq:girsanov}
    G_T(b_+,b_-)=\exp\left(\int_0^T\frac{b(\xi_s)}{\sigma^2(\xi_s)}\vd\xi_s
    -\frac{1}{2}\int_0^T\frac{b^2(\xi_s)}{\sigma^2(\xi_s)}\vd s\right).
\end{equation}
With the expression of $Q^\pm$ and $R^\pm$ given by \eqref{eq:def:Q} and \eqref{eq:def:R}, 
this Girsanov weight can be expressed simply as 
\begin{equation*}
    G_T(b_+,b_-)=\exp\left(\frac{b_+}{\sigma_+^2}R^+_T
	+\frac{b_-}{\sigma_-^2}R^-_T
	-\frac{b_+^2}{\sigma_+^2}Q_T^+
    -\frac{b_-^2}{\sigma_-^2}Q_T^-\right).
\end{equation*}

A reasonable way to set up an estimator of $(b_-,b_+)$ is to consider
$G_T(b_+,b_-)$ as a likelihood and to optimize this quantity. This
is how estimators for the drift are classically constructed~\cite{kut,lipster_II}.
We also set the following function, defined similarly as in \cite{su2015} as 
\begin{align*}
    \Lambda_T(b_+,b_-)&=\int_0^T b(\xi_s)\vd \xi_s-\frac{1}{2}\int_0^T b(\xi_s)^2\vd s
    \\
    &=b_+ R^+_T+b_- R^-_T-\frac{1}{2}b_+^2 Q_T^+-\frac{1}{2}b_-^2 Q_T^-.
\end{align*}
Such function can be interpreted
as a quasi-likelihood function, and has the advantage wrt \eqref{eq:girsanov} of not involving the diffusion coefficient. This approach can be used also when a specific functional form for the diffusion coefficient $\sigma$ is not specified.

\begin{notation}
To avoid confusion with the $+$ and $-$ used as indices, 
we write $\ppart{x}\eqdef\max\Set{x,0}$ for the positive part
of $x$ and $\mpart{x}\eqdef\max\Set{-x,0}\geq 0$ for the negative part.
\end{notation}

\begin{proposition} 
    \label{prop:qmle}
   Both the 
likelihood $G_T(b_+,b_-)$  and the    
   quasi-likelihood $\Lambda_T(b_+,b_-)$ are maximal at $(\beta^+_T,\beta^-_T)$ given by 
\begin{equation*}
    \beta^+_T=\frac{R_T^+}{Q_T^+}=b_++\frac{M_T^+}{Q_T^+}
    \text{ and }
    \beta^-_T=\frac{R_T^-}{Q_T^-}=b_-+\frac{M_T^-}{Q_T^-}.
\end{equation*}   
Estimators
$(\beta^+_T,\beta^-_T)$ can also be expressed as
    \begin{equation}
	\label{estb}
	\beta^\pm_T
    = \pm \frac{\pmpart{\xi_T}-\pmpart{\xi_0} -L_T(\xi)/2 }{Q^\pm_T }.
    \end{equation}
\end{proposition}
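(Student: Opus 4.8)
The plan is to treat the two assertions in turn: first the optimality of $(\beta^+_T,\beta^-_T)$, then their representation via local time. For the optimization I would note that, up to the positive constants $\sigma_\pm^{-2}$ attached to each variable, $\log G_T$ and $\Lambda_T$ are the same separable quadratic form in $(b_+,b_-)$, with no cross term: $\Lambda_T=\bigl(b_+R^+_T-\tfrac12 b_+^2 Q^+_T\bigr)+\bigl(b_-R^-_T-\tfrac12 b_-^2 Q^-_T\bigr)$. On the event $\{Q^\pm_T>0\}$ each summand is a strictly concave parabola in its own variable, so its unique maximizer is obtained by cancelling the derivative, giving $b_\pm=R^\pm_T/Q^\pm_T$. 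The same computation for $\log G_T$ produces, coordinate by coordinate, a stationarity equation proportional to $R^\pm_T-b_\pm Q^\pm_T=0$ in which the factor $\sigma_\pm^{-2}$ cancels; the maximizer is therefore identical, which is exactly why the maximum-likelihood and quasi-likelihood estimators coincide. The alternative form $\beta^\pm_T=b_\pm+M^\pm_T/Q^\pm_T$ then follows at once by dividing $R^\pm_T=M^\pm_T+b_\pm Q^\pm_T$ from \eqref{eq:RM} by $Q^\pm_T$.

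For the representation \eqref{estb}, since $\beta^\pm_T=R^\pm_T/Q^\pm_T$ it suffices to identify $R^+_T=\ppart{\xi_T}-\ppart{\xi_0}-L_T(\xi)/2$ and $R^-_T=-\bigl(\mpart{\xi_T}-\mpart{\xi_0}-L_T(\xi)/2\bigr)$. The tool is the symmetric Tanaka formula $\abs{\xi_t}=\abs{\xi_0}+\int_0^t\sgn(\xi_s)\vd\xi_s+L_t(\xi)$, which is tailored to the symmetric local time defined in Data~\ref{dat:1}. Writing $\ppart{x}=\tfrac12(x+\abs{x})$, taking $x=\xi_t$ and $x=\xi_0$, and inserting Tanaka's formula together with $\xi_t-\xi_0=\int_0^t\vd\xi_s$ and $\sgn(\xi_s)=\ind{\xi_s>0}-\ind{\xi_s<0}$, the Brownian and drift integrals collapse to $\int_0^t\ind{\xi_s>0}\vd\xi_s=R^+_t$, leaving $\ppart{\xi_t}-\ppart{\xi_0}=R^+_t+\tfrac12 L_t(\xi)$. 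Applying the analogous computation to $\mpart{x}=\tfrac12(\abs{x}-x)$ yields $\mpart{\xi_t}-\mpart{\xi_0}=-R^-_t+\tfrac12 L_t(\xi)$, and rearranging both identities gives \eqref{estb}.

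The one delicate point is the treatment of the threshold itself. The indicators defining $R^\pm_T$ involve $\ind{\pm\xi_s\geq 0}$, whereas the manipulation above yields $\ind{\xi_s>0}$ and $\ind{\xi_s<0}$; I would bridge this by observing that $\int_0^T\ind{\xi_s=0}\vd\xi_s=0$, since its martingale part has vanishing quadratic variation $\int_0^T\ind{\xi_s=0}\sigma^2(\xi_s)\vd s=0$ and its drift part likewise vanishes, the Lebesgue time spent at $0$ being null by the occupation-time formula. The same remark identifies $\int_0^T\ind{\xi_s\geq 0}\vd\xi_s$ with $\int_0^T\ind{\xi_s>0}\vd\xi_s$. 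I expect this measure-theoretic bookkeeping at the threshold, together with the deliberate use of the \emph{symmetric} local time --- which is precisely what makes the $\sgn$-based Tanaka formula hold with no asymmetry correction stemming from the jump of $\sigma$ at $0$ --- to be the only genuinely subtle ingredient; everything else is a routine quadratic optimization and an application of Tanaka's formula.
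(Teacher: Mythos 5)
Your proof is correct and follows essentially the same route as the paper's: the same separable quadratic optimization (with the $\sigma_\pm^{-2}$ factors cancelling in the stationarity equations), the identity $R^\pm_T=M^\pm_T+b_\pm Q^\pm_T$, and the Itô--Tanaka formula for $\pmpart{\xi}$ with the symmetric local time. The only difference is cosmetic: you derive the Tanaka identities from the symmetric formula for $\abs{\xi}$ and explicitly justify $\int_0^T\ind{\xi_s=0}\vd\xi_s=0$, a point the paper's proof leaves implicit.
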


\begin{proof}
The equation $\nabla \Lambda_T(b_+,b_-)=0$ is solved for $(\beta^+_T,\beta^-_T)$ as above.
Since $Q_T^\pm\geq 0$, $\Lambda_T(b_+,b_-)$ is not only critical but also maximal at $(\beta^+_T,\beta^-_T)$
with $\beta^\pm_T=R_T^\pm/Q_T^\pm$. With \eqref{eq:RM}, we obtain 
$\beta^\pm_T=b_\pm+M_T^\pm/Q_T^\pm$.
The same holds for  $\log G_T(b_+,b_-)$.

By the Itô-Tanaka formula, 
\begin{equation*}
    \ppart{\xi}_T=\ppart{\xi}_0+\int_0^t \ind{\xi_s\geq 0}\vd \xi_s+\frac{1}{2}L_T(\xi)
    \text{ and }
    \mpart{\xi}_T=\mpart{\xi}_0-\int_0^t \ind{\xi_s\leq 0}\vd \xi_s+\frac{1}{2}L_T(\xi). 
\end{equation*}
Hence \eqref{estb}. 
\end{proof}

%%%%%%%%%%%%%%%%%%%%%%%%%%%%%%%%%%%%%%%%%%%%%%%%%%%%%%%%%%%%%%%%%%%%%%

\subsection{An estimator based on discrete observations}

\label{sec:disc}

Within the framework of Data~\ref{dat:1}, we constructed estimator $\beta$ in Proposition \eqref{prop:qmle}. In practice, however, one cannot observe the whole trajectory $(\xi_t)_{t\in [0,T]}$. We explain in Section~\ref{simulations} how the quantities involved in the estimator expressed as in \eqref{estb} can be approximated from discrete observations of the process. Hence, an approximation of the estimators $\beta^\pm_T$ may be 
constructed from the discrete observations $(\xi_{iT/N})_{i=0,\dotsc,N}$. 

We now consider an alternative framework, in which we suppose from the beginning that the path is observed only on a discrete time grid. %,  similarly to \cite{kessler} and \cite{alaya_kebaier_2013}[Section 4].

\begin{data}
    \label{dat:2}
    We observe of a path $(\xi_{iT/N})_{i=0,\dotsc,N}$ at discrete times.
\end{data}

Owing to the expressions for $G_T(b_+,b_-)$ and $\Lambda_T(b_+,b_-)$, we discretize the stochastic
integrals as well as the occupation times. 
With  $\xi_i\eqdef \xi_{iT/N}$ and $\Delta_i \xi=\xi_{i+1}-\xi_i$, we set 
\begin{equation}
    \label{eq:dis:QR}
    \mathsf{Q}^+_{T,N} \eqdef \frac{T}{n}\sum_{i=0}^{N-1}  \ind{\xi_{i}\geq 0} 
       \text{ and }
   \mathsf{R}^\pm_{T,N} \eqdef \sum_{i=0}^{N-1} \ind{\pm\xi_i\geq 0}\Delta_i \xi
\end{equation}
which are discretized versions of $Q^\pm_T$ and $R^\pm_T$. We also define 
the \emph{discretized likelihood} as 
\begin{equation*}
    \mathsf{G}_{T,N}(b_+,b_-)=
   \exp\left(
   \sum_{i=0}^{N-1} \frac{b(\xi_i)}{\sigma^2(\xi_i)} \Delta_i \xi-\frac{1}{2N}\sum_{i=0}^{N-1} \frac{b^2(\xi_i)}{\sigma^2(\xi_i)}
\right).
\end{equation*}
and the \emph{discretized quasi-likelihood} as 
\begin{equation*}
    \mathsf{\Lambda}_{T,N}(b_+,b_-)=\sum_{i=0}^{N-1} b(\xi_i)\Delta_i \xi-\frac{1}{2N}\sum_{i=0}^{N-1} b(\xi_i)^2.
\end{equation*}
We also  define the following discrete-time approximation of the local time: 
\begin{equation}
    \label{eq:dis:loctime}
\LL_{T,N}\eqdef 2 \sum_{i=0}^{N-1} \ind{\xi_i \xi_{i+1}<0} \abs{\xi_{i+1}}.
\end{equation}

We have a result similar to the one of Proposition~\ref{prop:qmle}. 

\begin{proposition} 
    \label{prop:qmle:2}
    The likelihood $\mathsf{G}_{T,N}(b_+,b_-)$ and the quasi-likelihood $\mathsf{\Lambda}_{T,N}(b_+,b_-)$ are maximal at 
    $(b_+,b_-)=(\disbeta^+_{T,N},\disbeta^-_{T,N})$, with
\begin{equation}
    \label{def:b:qml}
    \disbeta_{T,N}^+\eqdef \frac{\mathsf{R}^+_{T,N}}{\mathsf{Q}^+_{T,N}}
    \text{ and }
    \disbeta_{T,N}^-\eqdef \frac{\mathsf{R}^-_{T,N}}{\mathsf{Q}^-_{T,N}}.
\end{equation}
We also have the following discrete version of \eqref{estb}:
\begin{equation}
    \label{eq:disbeta}
    \disbeta^\pm_{T,N}=\pm\frac{\pmpart{\xi}_T-\pmpart{\xi}_0-\LL_{T,N}/2}{\mathsf{Q}^\pm_{T,N}}.
\end{equation}
\end{proposition}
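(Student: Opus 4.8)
The plan is to mirror the proof of Proposition~\ref{prop:qmle}, the only new ingredient being a discrete Itô--Tanaka identity. Since $b(\cdot)$ takes only the two values $b_+$ and $b_-$, I would first write $b(\xi_i)=b_+\ind{\xi_i\geq 0}+b_-\ind{\xi_i<0}$, so that $b(\xi_i)^2=b_+^2\ind{\xi_i\geq 0}+b_-^2\ind{\xi_i<0}$ by idempotence and orthogonality of the indicators. Grouping the terms of $\mathsf{\Lambda}_{T,N}$ (resp.\ of $\log\mathsf{G}_{T,N}$) according to the sign of $\xi_i$ then makes the discrete integrals $\mathsf{R}^\pm_{T,N}$ and occupation times $\mathsf{Q}^\pm_{T,N}$ of \eqref{eq:dis:QR} appear, and the objective decouples into the sum of a function of $b_+$ alone and a function of $b_-$ alone.

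Each of these two functions has the form $b_\pm\mapsto c_1 b_\pm-\tfrac12 c_2 b_\pm^2$ with $c_2$ a nonnegative multiple of $\mathsf{Q}^\pm_{T,N}$; as in the continuous case, since $\mathsf{Q}^\pm_{T,N}\geq 0$ it is concave, so solving $\nabla\mathsf{\Lambda}_{T,N}=0$ (resp.\ $\nabla\log\mathsf{G}_{T,N}=0$) coordinate by coordinate produces its unique maximizer. One reads off $\disbeta^\pm_{T,N}=\mathsf{R}^\pm_{T,N}/\mathsf{Q}^\pm_{T,N}$, which is \eqref{def:b:qml}. Exactly as in Proposition~\ref{prop:qmle}, the weights $\sigma_\pm^2$ entering $\mathsf{G}_{T,N}$ cancel in the first-order conditions, so the likelihood and the quasi-likelihood share the same maximizer.

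For \eqref{eq:disbeta} I would prove the discrete analogue of the Itô--Tanaka formula used in Proposition~\ref{prop:qmle}. The elementary identities, obtained by checking the four sign configurations of the pair $(\xi_i,\xi_{i+1})$, are
\begin{equation*}
    \ppart{\xi}_{i+1}-\ppart{\xi}_i=\ind{\xi_i\geq 0}\Delta_i\xi+\ind{\xi_i\xi_{i+1}<0}\abs{\xi_{i+1}}
\end{equation*}
and, symmetrically,
\begin{equation*}
    \mpart{\xi}_{i+1}-\mpart{\xi}_i=-\ind{\xi_i<0}\Delta_i\xi+\ind{\xi_i\xi_{i+1}<0}\abs{\xi_{i+1}}.
\end{equation*}
Summing over $i=0,\dots,N-1$, the left-hand sides telescope to $\pmpart{\xi}_T-\pmpart{\xi}_0$, the signed increments reproduce $\pm\mathsf{R}^\pm_{T,N}$, and the crossing terms reproduce $\tfrac12\LL_{T,N}$ by the definition \eqref{eq:dis:loctime}. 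This gives $\pm\mathsf{R}^\pm_{T,N}=\pmpart{\xi}_T-\pmpart{\xi}_0-\tfrac12\LL_{T,N}$; dividing by $\mathsf{Q}^\pm_{T,N}$ and recalling \eqref{def:b:qml} yields \eqref{eq:disbeta}.

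I expect the only genuinely delicate step to be the two crossing identities above: the sign-by-sign bookkeeping must be done carefully so that at each sign change the contribution is exactly the overshoot $\abs{\xi_{i+1}}$, matching the factor $2$ in \eqref{eq:dis:loctime}. One should also note that these identities can fail only on the event $\{\xi_i=0\}$; since the diffusion is non-degenerate, $\xi_i$ has a density for $i\geq 1$ and hence $\PP(\xi_i=0)=0$, so all the stated equalities, and thus \eqref{eq:disbeta}, hold almost surely. The remaining computations are the routine decoupled quadratic maximization of the first two paragraphs.
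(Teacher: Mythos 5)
Your proof is correct and follows essentially the same route as the paper's: the paper also dismisses the maximization as immediate and rests the identity \eqref{eq:disbeta} on exactly your telescoping relation, stated there in the rearranged form $\ind{\xi_i\geq 0}\Delta_i \xi=\ppart{\xi_{i+1}}-\ppart{\xi_i}-\ind{\xi_i\xi_{i+1}<0}\abs{\xi_{i+1}}$ together with its negative-part analogue. Your explicit case-checking and the remark that the identity can only fail on the null event $\{\xi_i=0\}$ are merely a more careful write-up of the same argument.
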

\begin{proof}

    The fact that $\mathsf{G}_{T,N}(b_+,b_-)$ and $\mathsf{\Lambda}_{T,N}(b_+,b_-)$ are maximal at $\disbeta_{T,N}^\pm$ is trivial.
We note that 
\begin{equation*}
    \ind{\xi_i\geq 0}\Delta_i \xi=\ppart{\xi_{i+1}}-\ppart{\xi_i}-\ind{\xi_i\xi_{i+1}<0}\abs{\xi_{i+1}}
\end{equation*}
so that 
\begin{equation}
    \label{eq:new:7}
    \mathsf{R}^+_{T,N}=\ppart{\xi}_T-\ppart{\xi}_0-\frac{1}{2}\LL_{T,N}.
\end{equation}
Similarly, one can show that
\begin{equation}
    \label{eq:new:7bis}
    \mathsf{R}^-_{T,N}=-\mpart{\xi}_T+\mpart{\xi}_0+\frac{1}{2}\LL_{T,N}.
\end{equation}
This proves \eqref{eq:disbeta}.
\end{proof}

\begin{lemma}
    \label{lem:discrete}
    For any $T>0$, $\mathsf{Q}^\pm_{T,N}$ converges in probability to $Q^\pm_T$ and
    \begin{equation}
	\label{eq:new:6}
	\mathsf{R}^\pm_{T,N}\xrightarrow[N\to\infty]{\PP} R_T^\pm=\int_0^T \ind{\pm\xi_s\geq 0}\vd \xi_s
	=\pm\Paren*{\pmpart{\xi_T}-\pmpart{\xi_0}-\frac{1}{2}L_T(\xi)}.
    \end{equation}
    In particular, $\disbeta_{T,N}^\pm$ converges in probability to $\beta^\pm_T$ as $N\to\infty$.
\end{lemma}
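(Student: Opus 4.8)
The plan is to rewrite both discrete quantities as integrals of a single bounded predictable step process and then pass to the limit, handling $\mathsf{Q}^\pm_{T,N}$ by ordinary dominated convergence and $\mathsf{R}^\pm_{T,N}$ by the dominated convergence theorem for stochastic integrals. Write $\eta_N(s)\eqdef (T/N)\lfloor Ns/T\rfloor$ for the last grid point before $s$ and set $Y^{N,\pm}_s\eqdef\ind{\pm\xi_{\eta_N(s)}\geq 0}$, which is piecewise constant on the grid and $\cF_{\eta_N(s)}$-measurable. With this notation the two sums in \eqref{eq:dis:QR} become genuine integrals,
\[
\mathsf{Q}^\pm_{T,N}=\int_0^T Y^{N,\pm}_s\vd s\quad\text{and}\quad \mathsf{R}^\pm_{T,N}=\int_0^T Y^{N,\pm}_s\vd\xi_s .
\]
The whole argument rests on one fact: the process spends no time at the threshold, $\int_0^T\ind{\xi_s=0}\vd s=0$ almost surely. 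This follows from the occupation times formula together with $\sigma_\pm>0$ (so that $\vd\bra{\xi}_s=\sigma^2(\xi_s)\vd s$ is comparable to $\vd s$), and it also reconciles the strict convention in \eqref{eq:def:Q} with the non-strict one in \eqref{eq:dis:QR}.

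By path continuity $\xi_{\eta_N(s)}\to\xi_s$, hence $Y^{N,\pm}_s\to\ind{\pm\xi_s\geq 0}$ at every $(s,\omega)$ with $\xi_s(\omega)\neq 0$. The displayed fact says that for almost every $\omega$ the exceptional set $\{s:\xi_s(\omega)=0\}$ is Lebesgue-null, so by Fubini $\{(s,\omega):\xi_s(\omega)=0\}$ is negligible for $\vd s\otimes\vd\PP$, and therefore also for $\sigma^2(\xi_s)\vd s\otimes\vd\PP$ and $\abs{b(\xi_s)}\vd s\otimes\vd\PP$. For $\mathsf{Q}^\pm_{T,N}$ this already closes the argument: since $\abs{Y^{N,\pm}}\leq 1$, pathwise dominated convergence gives $\mathsf{Q}^\pm_{T,N}\to Q^\pm_T$ almost surely, hence in probability.

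For $\mathsf{R}^\pm_{T,N}$ I would split $\vd\xi_s=\sigma(\xi_s)\vd W_s+b(\xi_s)\vd s$. The finite-variation part is handled pathwise exactly as for $\mathsf{Q}$, using that $\abs{b}$ is bounded. For the martingale part the Itô isometry gives
\[
\EE\Bigl[\Bigl(\int_0^T (Y^{N,\pm}_s-\ind{\pm\xi_s\geq 0})\,\sigma(\xi_s)\vd W_s\Bigr)^2\Bigr]
=\EE\Bigl[\int_0^T (Y^{N,\pm}_s-\ind{\pm\xi_s\geq 0})^2\sigma^2(\xi_s)\vd s\Bigr],
\]
whose integrand is bounded by $\max(\sigma_+^2,\sigma_-^2)$ and tends to $0$ for $\vd s\otimes\vd\PP$-almost every $(s,\omega)$; bounded convergence sends the right-hand side to $0$, giving $L^2$- and hence in-probability convergence. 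Adding the two pieces yields $\mathsf{R}^\pm_{T,N}\convp R^\pm_T$, and the representation $R^\pm_T=\pm\Paren*{\pmpart{\xi_T}-\pmpart{\xi_0}-\frac{1}{2}L_T(\xi)}$ is exactly the Itô–Tanaka identity already recorded in the proof of Proposition~\ref{prop:qmle}. Finally, since $\beta^\pm_T=R^\pm_T/Q^\pm_T$ and $\disbeta^\pm_{T,N}=\mathsf{R}^\pm_{T,N}/\mathsf{Q}^\pm_{T,N}$, applying the continuous mapping theorem to $(r,q)\mapsto r/q$ on the event $\Set{Q^\pm_T>0}$ where the estimator is defined delivers $\disbeta^\pm_{T,N}\convp\beta^\pm_T$.

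The main obstacle is the martingale term. Because $\ind{\pm\xi_s\geq 0}$ is genuinely discontinuous in $s$ whenever $\xi$ touches $0$, and in particular fails to be left-continuous there, the textbook statement that left-point Riemann sums converge to the Itô integral for a (left-)continuous integrand does not apply off the shelf. Everything hinges on quantifying that the bad set $\Set{\xi_s=0}$ is null for both time and quadratic variation, which is precisely what allows dominated convergence to absorb the discontinuity.
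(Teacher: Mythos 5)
Your proof is correct, but it takes a genuinely different route from the paper's. The paper first reduces to the driftless case by Girsanov, invokes a result of \cite{lmt1} for the convergence of $\mathsf{Q}^\pm_{T,N}$, and then handles $\mathsf{R}^\pm_{T,N}$ indirectly: it uses the exact algebraic identity $\mathsf{R}^+_{T,N}=\ppart{\xi_T}-\ppart{\xi_0}-\tfrac{1}{2}\LL_{T,N}$ and proves that the discrete local time $\LL_{T,N}$ converges to $L_T(\xi)$, via the representation of $\xi$ as a rescaled skew Brownian motion and the Jacod-type limit theorems of \cite{j1,lmt1} (computing the kernel $F$ and the constant $C=\sigma_-\sigma_+/(\sigma_-+\sigma_+)$). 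You instead argue directly and in a self-contained way: you recognize the Riemann sums as integrals of the simple adapted process $Y^{N,\pm}$, observe via the occupation times formula that $\{s:\xi_s=0\}$ is Lebesgue-null almost surely (which is indeed the crux, since the integrand $\ind{\pm\xi_s\geq 0}$ is discontinuous exactly there), and then close with dominated convergence for the Lebesgue parts and the It\^o isometry plus bounded convergence for the martingale part. Both arguments are sound. Yours is more elementary, needs no Girsanov reduction, no skew-Brownian representation and no external limit theorem; moreover, combined with the identity \eqref{eq:new:7} it delivers $\LL_{T,N}\convp L_T(\xi)$ as a corollary rather than requiring it as an input. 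What the paper's heavier route buys is quantitative information: the machinery of \cite{j1,lmt1} is what underlies the rate discussion for the local time approximation (the conjectured $N^{1/4}$ rate and the remarks on joint $T,N$ asymptotics), which a soft dominated-convergence argument cannot produce. One small point to keep explicit if you write this up: the limit of $\mathsf{Q}^+_{T,N}$ is a priori $\int_0^T\ind{\xi_s\geq 0}\vd s$ while $Q^+_T$ is defined with a strict inequality; you correctly note that the two coincide because the time spent at $0$ is null, but this reconciliation deserves a sentence rather than a parenthesis.
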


\begin{remark} 
The result of Lemma \ref{lem:discrete} says that, for fixed $T$,
the approximation of the continuous-time estimator $\beta^\pm_T$ given by
$\disbeta_{T,N}^\pm$ becomes more accurate as $N$ increases. It would be
interesting, from a practical viewpoint, to study the asymptotic behavior of
these estimators as $T,N$ jointly increase in a suitable way.
This seems to be a very hard problem, even in the simplified setting with
a constant volatility coefficient and drift as in the ergodic case \textbf{(E)}.
Indeed, the speed of the convergence in \eqref{eq:new:6} can be
obtained for fixed $T$, but some sort of uniformity in $T$ of the convergences
in Lemma~\ref{lem:discrete} is needed to prove a joint result in
$T,N$. Unluckily, even in this simplified case, the asymptotic behavior of the local time
estimator depends on the very technical results of Jacod (see \cite[Theorem
1.2]{j1}), which are not uniform in $T$. Besides, they are proved using a Girsanov transform in order to
eliminate the drift. As $T$ increases, the Girsanov weight degenerates and it
is very difficult to control its behavior. In the general framework with
$\sigma_+\neq \sigma_-$, this problem looks even harder since the result in
\cite{j1} does not directly apply. 
\end{remark}

\begin{proof}
As we are considering a convergence in probability, due to the Girsanov theorem, we may 
assume that $b_+=b_-=0$. For the sake of simplicity, we assume that $\xi_0=0$.
The convergence of $\mathsf{Q}^\pm_{T,N}$ to $Q^\pm_T$ follows from \cite[Theorem~4.14, p.~3587]{lmt1}.

We consider now only the convergence of $\mathsf{R}^+_{T,N}$ to $R_T^+$, the computations being
the same for the \textquote{negative} part.

Besides, we know that $\xi_t=\sigma(X_t)X_t$, where $X$ 
is a Skew Brownian motion, \textit{i.e.}, the solution to the SDE
$X_t=W_t+\theta L_t(X)$, where $W$ is a Brownian motion, $L_t(X)$ is the local 
time of $X$ at $0$ and $\theta=(\sigma_--\sigma_+)/(\sigma_-+\sigma_+)$. The local
times $L(\xi)$ and $L(X)$ are linked by (see \cite{LP}) 
\begin{equation}
    \label{eq:new:2}
    L(X)=\frac{\sigma_++\sigma_-}{2\sigma_+\sigma_-}L(\xi).
\end{equation}
The density of the skew Brownian motion is 
\begin{equation*}
    p_\theta(t,x,y)
    =\frac{1}{\sqrt{2\pi t}}\exp\Paren*{-\frac{(x-y)^2}{2t}}
    +\theta\sgn(y)\frac{1}{\sqrt{2\pi t}}\exp\Paren*{-\frac{(\abs{x}+\abs{y})^2}{2t}}.
\end{equation*}

Using this transform, with $f(x,y)\eqdef\sigma(y)\ind{xy<0}\abs{y}$, 
\begin{equation*}
\frac{\LL_{T,N}}{2}
    =\frac{1}{\sqrt{N}} \sum_{i=0}^{N-1} \ind{N X_i X_{i+1}<0} \sqrt{N}\abs{X_{i+1}}\sigma(X_{i+1})
    =\frac{1}{\sqrt{N}} \sum_{i=0}^{N-1}f(\sqrt{N}X_i,\sqrt{N}X_{i+1}).
    \end{equation*}
    Following \cite{lmt1}, which adapts to the Skew Brownian motion some results of \cite{j1}, we define
\begin{equation*}
    F(x)\eqdef\int_{-\infty}^{+\infty} p_\theta(1,x,y)f(x,y)\vd y.
\end{equation*}
Hence, for $x<0$, 
\begin{align*}
    F(x)&=\sigma_+(1+\theta)\int_0^{+\infty} 
\frac{1}{\sqrt{2\pi}}\exp\Paren*{-\frac{(x-y)^2}{2}}y\vd y
\\
&=\sigma_+\frac{(1+\theta)}{\sqrt{2\pi}}\exp\Paren*{\frac{-x^2}{2}}
+\sigma_+(1+\theta)x\overline{\Phi}(-x)
\end{align*}
with $\overline{\Phi}(x)=\int_x^{+\infty} \exp(-z^2)/\sqrt{2\pi}\vd z$. 
Similarly, for $x>0$, 
\begin{equation*}
    F(x)=+\sigma_-\frac{(1-\theta)}{\sqrt{2\pi}}\exp\Paren*{\frac{-x^2}{2}}
    -\sigma_-(1-\theta)x\overline{\Phi}(x).
\end{equation*}
Let us define
\begin{equation*}
    C
    \eqdef(1+\theta)\int_0^{+\infty} F(x)\vd x+(1-\theta)\int_{-\infty}^0 F(x)\vd x
    =(1-\theta^2)\frac{\sigma_-+\sigma_+}{4}.
\end{equation*}
Injecting the value of $\theta$, 
\begin{equation}
    \label{eq:new:3}
    C=\frac{\sigma_-\sigma_+}{\sigma_-+\sigma_+}.
\end{equation}
From \cite[Proposition~2]{lmt1}, using \eqref{eq:new:2} and \eqref{eq:new:3},
\begin{equation*}
\frac{1}{2}\LL_{T,N} \xrightarrow[N\to\infty]{\PP} CL_T(X)=\frac{1}{2}L_T(\xi).
\end{equation*}
This,  with \eqref{eq:new:7}, shows \eqref{eq:new:6}.
\end{proof}

%%%%%%%%%%%%%%%%%%%%%%%%%%%%%%%%%%%%%%%%%%%%%%%%%%%%%%%%%%%%%%%%%%%%%%

\section{Analytic characterization of the regime of the process}

\label{sec:regime}

\subsection{Scale function and speed measure}

A well known fact \cite{ito,ks,rogers2} states 
that the infinitesimal generator $(\cL,\Dom(\cL))$ of the process $\xi$ solution to \eqref{eq:process}
may be written as 
\begin{gather*}
    \cL f=\frac{1}{2}\sigma^2(x)e^{-h(x)}\frac{\dd}{\dd x}\left(
    e^{h(x)}\frac{\dd f(x)}{\dd x}\right)\ 
    \text{ with }
    h(x)=\int_0^x \frac{2b(y)}{\sigma^2(y)}\vd y\\
    \text{ for all }
    f\in\Dom(\cL)=\Set{f\in\cCz(\RR)\given \cL f\in\cCz(\RR)}.
\end{gather*}
The process $X$ is fully characterized by its \emph{speed measure} $M$
with a density $m$ and its \emph{scale function} $S$ with 
\begin{equation}
    \label{eq:speend-n-scale}
    m(x)\eqdef\frac{2}{\sigma(x)^2}\exp( h(x) )
\text{ and }
S(x)\eqdef\int_0^x \exp( -h(y) )\vd y.
\end{equation}

%%%%%%%%%%%%%%%%%%%%%%%%%%%%%%%%%%%%%%%%%%%%%%%%%%%%%%%%%%%%%%%%%%%%%%
\subsection{The regimes of the process}

The diffusion $X$ is either recurrent or transient.
If $\lim_{x\to+\infty} S(x)=+\infty$ and $\lim_{x\to-\infty} S(x)=-\infty$,
then the process is (positively or null) \emph{recurrent}. 
Otherwise, it is \emph{transient} \cite{ito,ks}.

When $b(x)=b_+$ for $x\geq 0$, 
\begin{equation*}
    S(x)=\begin{cases}
	x&\text{ if }b_+=0,\\
	\dfrac{\sigma_+^2}{2b_+}\left(1-\exp\left(-\dfrac{2b_+x}{\sigma_+^2}\right)\right)&\text{ if }b_+>0,\\
	\dfrac{\sigma_+^2}{2\abs{b_+}}\left(\exp\left(\dfrac{2\abs{b_+}x}{\sigma_+^2}\right)-1\right)&\text{ if }b_+<0.
    \end{cases}
\end{equation*}
Similar formulas hold for~$b_-$. 
Hence, the process~$\xi$ is transient if only if~$b_+>0$ or~$b_-<0$. 

A recurrent process is either \emph{null recurrent} or \emph{positive recurrent}. 
The process is positive recurrent if and only if $M(\RR)\eqdef\int_\RR m(x)\vd x<+\infty$,
in which case it is actually \emph{ergodic}.
Therefore, the process $\xi$ is ergodic if and only if~$b_+<0$ and~$b_->0$. Otherwise, 
the process $\xi$ is only null recurrent. 

When the process is ergodic ($b_+<0$, $b_->0$), its invariant measure is 
\begin{equation*}
    \frac{m(x)}{M(\RR)}\vd x
    =\begin{cases}
	\dfrac{1}{\sigma_+^2}\times\dfrac{|b_+| b_-}{b_-+\abs{b_+}}e^{\frac{-2x\abs{b_+}}{\sigma_+^2}}&\text{ if }x\geq0,\\
		\dfrac{1}{\sigma_-^2}\times
		\dfrac{b_-\abs{b_+}}{b_-+\abs{b_+}}e^{\frac{2xb_-}{\sigma_-^2}}&\text{ if }x<0.
    \end{cases}
\end{equation*}

Therefore, the regime of $\xi$ depends only on the respective signs of~$b_+$ and~$b_-$. 
Nine combinations are possible. As some cases are symmetric,
we actually consider five cases exhibiting different asymptotic behaviors of $Q_T^\pm$,
hence of the estimators.
This is summarized in Table~\ref{table:rec}.

\begin{table}
    \begin{center}
    \begin{tabular}{c | c c c}
	& $b_+>0$ & $b_+=0$ &  $b_+<0$ \\
	\toprule
    $b_->0$ & transient \textbf{T0} & null recurrent \textbf{N1} & ergodic \textbf{E} \\
$b_-=0$ & transient \textbf{T0} & null recurrent \textbf{N0} & null recurrent \textbf{N1}\\
	$b_-<0$ & transient \textbf{T1} & transient \textbf{T0}& transient \textbf{T0}\\
	\bottomrule
    \end{tabular}
    \caption{\label{table:rec} Recurrence and transience properties of $\xi$.}
\end{center}
\end{table}

These cases are:
\begin{itemize}[noitemsep,topsep=0pt]
    \item[\textbf{E)}] Ergodic case $b_+<0$, $b_->0$.
    \item[\textbf{N0)}] Null recurrent case $b_+=0$, $b_-=0$.
    \item[\textbf{N1)}] Null recurrent case $b_+=0$, $b_->0$.
    \item[\textbf{T0)}]  Transient case $b_+>0$, $b_-\geq 0$.
    \item[\textbf{T1)}]  Transient case $b_+>0$, $b_-< 0$.
\end{itemize}
Case \textbf{T0} corresponds to two entries of  table \ref{table:rec}.
The case $b_+<0$, $b_-=0$ is symmetric to \textbf{N1}.
Case $b_+\leq 0$, $b_-<0$ is symmetric to \textbf{T0}.

%%%%%%%%%%%%%%%%%%%%%%%%%%%%%%%%%%%%%%%%%%%%%%%%%%%%%%%%%%%%%%%%%%%%%%
\section{Asymptotic behavior of the estimators}
\label{sec:CLT}

In this section, we state our main results on the asymptotic behavior  of the occupation times of the process and the corresponding ones of the estimators, for each of the 5 cases.

%% E
\begin{proposition}[Ergodic case \textbf{E}]
    \label{prop:E}
 If $b_+<0,b_->0$, then 
    \begin{equation*}
\left(\frac{Q^+_T}{T},\frac{Q^-_T}{T}\right) \convas[T\to\infty]\left(\frac{\abs{b_-}}{\abs{b_-}+\abs{b_+}},\frac{\abs{b_+}}{\abs{b_-}+\abs{b_+}}\right).
    \end{equation*}
    In addition, 
    \begin{gather*}
	(\beta^+_T,\beta^-_T)\convas[T\to\infty] (b_+,b_-)\\
	\text{ and }
	\frac{\sqrt{T}}{\sqrt{\abs{b_-}+\abs{b_+}}}(\beta^+_T-b_+,\beta^-_T-b_-)\convl[T\to\infty] 
	\left(
	    \frac{\sigma_+}{\sqrt{\abs{b_-}}}\cN^+,
	\frac{\sigma_-}{\sqrt{\abs{b_+}}}\cN^-\right),
    \end{gather*}
    where $\cN^+$ and $\cN^-$ are two independent, unit Gaussian random variables.
\end{proposition}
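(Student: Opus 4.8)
The plan is to reduce all three assertions to the representation $\beta^\pm_T = b_\pm + M^\pm_T/Q^\pm_T$ of Proposition~\ref{prop:qmle}, together with the facts recorded before it that $M^\pm$ are continuous martingales with $\bra{M^\pm}_T=\sigma_\pm^2 Q^\pm_T$ and $\bra{M^+,M^-}=0$, and to treat the occupation times, the consistency, and the fluctuations in that order.

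First, for the occupation times. Since $b_+<0$ and $b_->0$ the process is positive recurrent, with the explicit invariant probability measure $\mu$ computed in Section~\ref{sec:regime}. I would invoke the ergodic theorem for positive recurrent diffusions, which gives $\frac1T\int_0^T f(\xi_s)\vd s\convas[T\to\infty]\int_\RR f\vd\mu$ for every $\mu$-integrable $f$, and apply it to $f=\ind{\RR_+}$ and $f=\ind{\RR_-}$. A direct integration of the density of $\mu$ gives $\mu(\RR_+)=\abs{b_-}/(\abs{b_-}+\abs{b_+})$ and $\mu(\RR_-)=\abs{b_+}/(\abs{b_-}+\abs{b_+})$, which yields the first display and, in particular, $Q^\pm_T\to\infty$ almost surely. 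For the consistency I would use that $\bra{M^\pm}_T=\sigma_\pm^2 Q^\pm_T\to\infty$ almost surely and apply the strong law of large numbers for continuous local martingales, namely $M^\pm_T/\bra{M^\pm}_T\convas[T\to\infty]0$ on $\{\bra{M^\pm}_\infty=\infty\}$. Rewriting $M^\pm_T/Q^\pm_T=\sigma_\pm^2\,M^\pm_T/\bra{M^\pm}_T$ then shows $\beta^\pm_T-b_\pm\to0$ almost surely, which is the claimed strong consistency.

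The central limit statement is the substantial part. I would write
\[
\frac{\sqrt T}{\sqrt{\abs{b_-}+\abs{b_+}}}\Paren*{\beta^+_T-b_+,\beta^-_T-b_-}
=\frac{1}{\sqrt{\abs{b_-}+\abs{b_+}}}\Paren*{\frac{T}{Q^+_T}\frac{M^+_T}{\sqrt T},\frac{T}{Q^-_T}\frac{M^-_T}{\sqrt T}},
\]
and treat the two kinds of factors separately. The ratios $T/Q^\pm_T$ converge almost surely to the deterministic constants $1/c^\pm$ with $c^+=\abs{b_-}/(\abs{b_-}+\abs{b_+})$ and $c^-=\abs{b_+}/(\abs{b_-}+\abs{b_+})$, by the first part. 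For the martingale vector $(M^+_T/\sqrt T,M^-_T/\sqrt T)$ I would apply a bivariate central limit theorem for continuous martingales: its bracket matrix, divided by $T$, is diagonal with entries $\sigma_\pm^2 Q^\pm_T/T$, which converge almost surely to the deterministic matrix $\mathrm{diag}(\sigma_+^2 c^+,\sigma_-^2 c^-)$, while the off-diagonal entry vanishes identically because $\bra{M^+,M^-}=0$. Hence $(M^+_T/\sqrt T,M^-_T/\sqrt T)\convl[T\to\infty](\sigma_+\sqrt{c^+}\,\cN^+,\sigma_-\sqrt{c^-}\,\cN^-)$ with a jointly Gaussian limit whose diagonal covariance forces $\cN^+$ and $\cN^-$ to be independent standard Gaussians. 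Combining this convergence in law with the almost sure deterministic limits of $T/Q^\pm_T$ through Slutsky's theorem, and simplifying $\sigma_\pm\sqrt{c^\pm}\,/\,\big(\sqrt{\abs{b_-}+\abs{b_+}}\;c^\pm\big)=\sigma_\pm/\sqrt{c^\pm(\abs{b_-}+\abs{b_+})}$ to $\sigma_+/\sqrt{\abs{b_-}}$ and $\sigma_-/\sqrt{\abs{b_+}}$ respectively, gives the announced limit.

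The main obstacle is the bivariate martingale central limit theorem: one must justify that the normalized pair $(M^+_T,M^-_T)$ converges jointly in law and that its two coordinates become asymptotically independent. This is precisely where the orthogonality $\bra{M^+,M^-}=0$ and the almost sure deterministic limit of the normalized bracket enter, and it is the reason I would rely on the continuous-martingale limit theorems collected in Section~\ref{sec:aux} rather than on a one-dimensional argument. A pleasant feature of the ergodic regime is that the random normalizers $T/Q^\pm_T$ have \emph{deterministic} limits, which is what allows me to pass from the (stable) convergence of the martingale vector to a plain Slutsky argument without tracking any additional dependence between the Gaussian limit and the normalizers.
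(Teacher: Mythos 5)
Your proposal is correct and follows essentially the same route as the paper: the ergodic theorem applied to the indicators of the half-lines for the occupation times, the martingale strong law (the content of Proposition~\ref{prop:convergence}(i)) for consistency, and the decomposition $\sqrt{T}(\beta^\pm_T-b_\pm)=\frac{T}{Q^\pm_T}\cdot\frac{M^\pm_T}{\sqrt{T}}$ combined with the bivariate martingale CLT of Proposition~\ref{prop:clt}. The only presentational difference is that the paper states the martingale CLT in the stable-convergence form (needed for the non-ergodic cases where the bracket limit is random), whereas you correctly observe that in the ergodic case the deterministic limit of $T/Q^\pm_T$ lets a plain Slutsky argument suffice.
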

%% end E

%% N0
\begin{proposition}[Null recurrent case with vanishing drift \textbf{N0}]
 Assume $b_+=b_-=0$.  Assume $\xi_0=0$.
 Then 
 \begin{equation*}
\left(\frac{Q^+_T}{T},\frac{Q^-_T}{T}\right)\eqlaw (\Lambda, 1-\Lambda)\text{ for all }T>0,
\end{equation*}
where $\Lambda$ follows a law of arcsine type with density
\begin{equation*}
p_{\Lambda}(u)\eqdef
\frac{1}{\pi} \frac{1}{\sqrt{u(1-u)}} \frac{\sigma_+/\sigma_-}{1-(1-(\sigma_+/\sigma_-)^2)u} \quad \text{ for } 0<u<1.
\end{equation*}
Besides, 
\begin{equation}\label{eq:scaling:N0}
\sqrt{T}(\beta_T^+,\beta_T^-)
\eqlaw
(\beta_1^+,\beta_1^-)
\end{equation}
where the explicit joint density of $(\beta_1^+,\beta_1^-)$ is given 
by \eqref{explicitdensitynodrift} below. In particular, 
$(\beta_T^+,\beta_T^-)$ converges almost surely to $(b_+,b_-)=(0,0)$.
\end{proposition}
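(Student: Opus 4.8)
The plan is to exploit the Brownian self-similarity of the driftless OBM. Since $b_+=b_-=0$ and $\xi_0=0$, the rescaled process $\tilde\xi_u\eqdef \xi_{Tu}/\sqrt T$ solves $\dd\tilde\xi_u=\sigma(\tilde\xi_u)\dd\tilde W_u$ with $\tilde W_u=W_{Tu}/\sqrt T$ a Brownian motion, because $\sigma$ depends on its argument only through its sign; hence $\tilde\xi\eqlaw\xi$. First I would record how the three ingredients of \eqref{estb} transform under this scaling: the sign of $\xi_{Tu}$ equals that of $\tilde\xi_u$, so $Q^\pm_T=T\,Q^\pm_1(\tilde\xi)$; the position scales as $\pmpart{\xi_T}=\sqrt T\,\pmpart{\tilde\xi_1}$; and the symmetric local time satisfies $L_T(\xi)=\sqrt T\,L_1(\tilde\xi)$, which follows from its definition as an occupation density with respect to $\dd\bra{\xi}_s$. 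Plugging these into \eqref{estb} yields $\beta^\pm_T=\frac{1}{\sqrt T}\beta^\pm_1(\tilde\xi)$, and $\tilde\xi\eqlaw\xi$ then gives \eqref{eq:scaling:N0} together with $(Q^+_T/T,Q^-_T/T)\eqlaw(Q^+_1,Q^-_1)$ for every $T$. Since the time spent at $0$ has zero Lebesgue measure, $Q^+_T+Q^-_T=T$, so $Q^-_T/T=1-Q^+_T/T$ and it remains only to identify the law of $\Lambda\eqdef Q^+_1$.

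To identify the law of $\Lambda$ I would pass to the skew Brownian motion $X$ with $\xi_t=\sigma(X_t)X_t$ and $\theta=(\sigma_--\sigma_+)/(\sigma_-+\sigma_+)$, exactly as in the proof of Lemma~\ref{lem:discrete}. Since $x\mapsto\sigma(x)x$ preserves sign, $\Lambda=\int_0^1\ind{X_s>0}\dd s$ is precisely the positive occupation fraction of $X$. By Itô's excursion theory the positive and negative excursions of $X$ away from $0$ occur with probabilities $p=\sigma_-/(\sigma_-+\sigma_+)$ and $1-p=\sigma_+/(\sigma_-+\sigma_+)$ while sharing the common Brownian excursion-length measure; consequently, evaluated at the inverse local time, the positive and negative occupation times are independent stable-$1/2$ subordinators whose Lévy measures carry the weights $p$ and $1-p$. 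Writing the standard stable-$1/2$ variable as $1/\cN^2$, the occupation fraction becomes $\Lambda=\frac{p^2\,\cN_1^2}{p^2\,\cN_1^2+(1-p)^2\,\cN_2^2}$ with $\cN_1,\cN_2$ independent standard Gaussians; a change of variables through the squared standard Cauchy $\cN_2^2/\cN_1^2$ then produces the Lamperti generalized arcsine density, which with $p/(1-p)=\sigma_-/\sigma_+$ is exactly $p_\Lambda$. Equivalently, one may simply invoke the known occupation-time law for oscillating Brownian motion.

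Finally, for the almost sure convergence I would argue through the martingale representation $\beta^\pm_T=M^\pm_T/Q^\pm_T=\sigma_\pm^2\,M^\pm_T/\bra{M^\pm}_T$ furnished by Proposition~\ref{prop:qmle} with $b_\pm=0$. In the null recurrent regime the process returns to $0$ infinitely often and accumulates infinite occupation time on each side, so $\bra{M^\pm}_\infty=\sigma_\pm^2\,Q^\pm_\infty=+\infty$ almost surely; the strong law of large numbers for continuous local martingales then forces $M^\pm_T/\bra{M^\pm}_T\to0$, whence $(\beta^+_T,\beta^-_T)\to(0,0)=(b_+,b_-)$ almost surely. I expect the genuine obstacle to be the identification of $p_\Lambda$: the scaling argument and the martingale law of large numbers are essentially automatic, whereas pinning down the precise weight $\sigma_+/\sigma_-$ in the generalized arcsine law requires the excursion-theoretic bookkeeping (or a careful appeal to the literature) and is where all the model-specific information enters.
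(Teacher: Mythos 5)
Your argument is correct and shares its skeleton with the paper's proof, but it diverges in how the two non-trivial pieces are handled. For the scaling identity \eqref{eq:scaling:N0} you rederive Brownian self-similarity of the driftless OBM directly from the SDE, checking how $\pmpart{\xi_T}$, $L_T(\xi)$ and $Q^\pm_T$ transform and feeding them into \eqref{estb}; the paper does exactly this but compresses it into a citation of the scaling relation from its companion work, so here you are just making explicit what the paper outsources. For the law of $\Lambda$, the routes genuinely differ: you pass to the skew Brownian motion and run the Lamperti/excursion-theoretic computation (positive excursions with probability $\sigma_-/(\sigma_-+\sigma_+)$, two independent stable-$1/2$ subordinators, ratio of weighted $\cN_1^2,\cN_2^2$), and your bookkeeping does land on the stated density $p_\Lambda$; the paper instead takes from the literature the explicit trivariate density of $(\xi_1,L_1(\xi),Q^+_1)$ for the skew/oscillating Brownian motion and reads everything off from that. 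Your route is more self-contained probabilistically but hides one step you should flag (transferring the occupation-time ratio from inverse local times back to the fixed time $1$, which is the classical Lamperti step); the paper's route buys more, because the same trivariate density is then pushed through a change of variables to produce the explicit joint law \eqref{explicitdensitynodrift} of $(\beta^+_1,\beta^-_1)$, which the proposition explicitly references and which your proposal never derives — that is the one piece missing from your write-up, though the skew-BM representation you already introduced is precisely the ingredient needed to supply it. Finally, your almost-sure consistency argument via $\bra{M^\pm}_\infty=\infty$ and the martingale strong law is exactly the paper's Proposition~\ref{prop:convergence}(i), so that part coincides.
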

%% end N0

%% N1
\begin{proposition}[Null recurrent case with non-vanishing drift \textbf{N1}]
Assume $b_+=0$, $b_->0$. Then 
\begin{equation*}
    \frac{Q^+_T}{T}\convas[T\to\infty]{}1
    \text{ and }(\beta^+_T,\beta^-_T)\convas[T\to\infty] (b_+,b_-).
\end{equation*}
In addition, there exists three independent unit Gaussian random variables
$\cN^-$, $\cN^+$ and $\cN$ such that 
\begin{equation}\label{eq:N1}
    \left(
	\frac{Q^-_T}{\sqrt{T}},
	\sqrt{T}(\beta^+_T-b_+),
    T^{1/4}(\beta^-_T-b_-)
\right)
	\convl[T\to\infty]
	\left(
	    \frac{\sigma_+}{b_-}\abs{\cN},
    \sigma_+\cN^+,
    \sigma_-\frac{\sqrt{b_-}}{\sqrt{\sigma_+\vphantom{\abs{\cN}}}}\cdot \frac{\cN^-}{\sqrt{\abs{\cN}}}
\right).
\end{equation}
\end{proposition}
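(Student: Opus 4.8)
The plan is to reduce everything to the two orthogonal martingales $M^\pm$ of \eqref{eq:def:M} and to the occupation times. Since $b_+=0$, Proposition~\ref{prop:qmle} gives $\beta^+_T=M^+_T/Q^+_T$ and $\beta^-_T=b_-+M^-_T/Q^-_T$, with $\bra{M^\pm}=\sigma_\pm^2 Q^\pm$ and $\bra{M^+,M^-}=0$. By the Dambis--Dubins--Schwarz theorem I would write $M^\pm_t=W^\pm_{\sigma_\pm^2 Q^\pm_t}$ for two independent Brownian motions $W^\pm$, the independence coming from the orthogonality of $M^+$ and $M^-$. The whole statement then follows once I control (i) the a.s.\ growth $Q^+_T/T\to 1$, (ii) the fluctuation $Q^-_T/\sqrt T$, and (iii) the two martingale central limit theorems, together with their joint law.

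First I would establish the occupation-time asymptotics. Computing the speed measure from \eqref{eq:speend-n-scale} gives $m(x)=\tfrac{2}{\sigma_-^2}e^{2b_-x/\sigma_-^2}$ on $\RR_-$, so that $\int_{-\infty}^0 m(x)\,\dd x=1/b_-<\infty$ while $\int_0^\infty m(x)\,\dd x=\infty$: the negative side is $m$-integrable. This is exactly the dichotomy that forces $Q^-_T=o(T)$, hence $Q^+_T/T\convas[T\to\infty]1$, and more precisely that ties $Q^-_T$ to the symmetric local time $L_T(\xi)$: through the occupation-times formula $Q^-_T=\sigma_-^{-2}\int_{-\infty}^0 L^a_T(\xi)\,\dd a$ and the ratio-limit theorem for the null-recurrent diffusion, $Q^-_T$ is asymptotically proportional to $L_T(\xi)$, the constant being fixed by $\int_{-\infty}^0 m\,\dd x=1/b_-$. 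On the positive side $b_+=0$, so $\ppart{\xi}$ behaves like a reflected Brownian motion with diffusivity $\sigma_+$ and its local time satisfies $L_T(\xi)/\sqrt T\convl[T\to\infty]c\,\abs{\cN}$; combining the two, and carefully tracking the (one-sided versus symmetric, $\sigma_\pm$-dependent) local-time normalizations, should yield $Q^-_T/\sqrt T\convl[T\to\infty]\tfrac{\sigma_+}{b_-}\abs{\cN}$. I would import the precise form of these convergences from the limit theorems of Section~\ref{sec:aux}.

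The almost sure statements then follow quickly: $Q^\pm_\infty=\infty$ a.s.\ by recurrence, so the law of the iterated logarithm applied to $W^\pm$ gives $M^\pm_T/Q^\pm_T\to 0$ a.s., i.e.\ $(\beta^+_T,\beta^-_T)\convas[T\to\infty](0,b_-)$. For the fluctuations, the martingale CLT gives $M^+_T/(\sigma_+\sqrt{Q^+_T})\to\cN^+$ and $M^-_T/(\sigma_-\sqrt{Q^-_T})\to\cN^-$; using $Q^+_T\sim T$ yields $\sqrt T(\beta^+_T-b_+)\convl[T\to\infty]\sigma_+\cN^+$, while for the negative part I would write $T^{1/4}(\beta^-_T-b_-)=T^{1/4}M^-_T/Q^-_T\approx\sigma_-\cN^-\,T^{1/4}/\sqrt{Q^-_T}$ and substitute $Q^-_T\approx\tfrac{\sigma_+}{b_-}\abs{\cN}\sqrt T$, which produces exactly $\sigma_-\sqrt{b_-/\sigma_+}\,\cN^-/\sqrt{\abs{\cN}}$. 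Slutsky's lemma and the continuous mapping theorem then assemble \eqref{eq:N1}.

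The crux — and the step I expect to be hardest — is the joint convergence with the asserted independence of $\cN$, $\cN^+$ and $\cN^-$. The independence of $\cN^-$ from the pair $(\cN,\cN^+)$ is the cleanest: it reflects $\bra{M^+,M^-}=0$ together with the fact that, conditionally on $Q^-_T$, the terminal value $M^-_T=W^-_{\sigma_-^2 Q^-_T}$ is asymptotically Gaussian and stably (mixing) convergent, so its limiting Gaussian decouples from $Q^-_T$ and from the positive side. The delicate point is the independence of $\cN^+$ (the limit of the positive martingale) from $\cN$ (which governs both $Q^-_T$ and the local time): since $L_T(\xi)$ and $M^+_T$ are built from the same excursions on $\RR_+$, this is not a mere orthogonality statement and must come from the stable/mixing form of the martingale CLT on $\RR_+$ established in Section~\ref{sec:aux}, applied jointly with the local-time limit. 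Proving that joint stable convergence, rather than the separate marginal limits, is the technical heart of the argument.
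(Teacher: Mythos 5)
Your overall architecture (reduce to $M^\pm$, represent them as time-changed Brownian motions, control the occupation times, assemble by Slutsky) matches the paper's, and your identification of the speed-measure dichotomy $M(\RR_-)=1/b_-<\infty=M(\RR_+)$ is exactly the paper's starting point. But you defer precisely the two steps that carry the mathematical content, and the tools you point to do not supply them. The convergence $Q^-_T/\sqrt{T}\convl[T\to\infty]\tfrac{\sigma_+}{b_-}\abs{\cN}$ cannot be ``imported from Section~\ref{sec:aux}'': the only CLT there, Proposition~\ref{prop:clt}, requires $Q^\pm_T/T\to c_\pm>0$ in probability, which fails on the negative side. The paper instead invokes the Darling--Kac type theorem of H\"opfner and L\"ocherbach for additive and martingale additive functionals of null recurrent diffusions; applying it requires computing the Green kernel from the fundamental system $(\phi_\lambda,\psi_\lambda)$, extracting the Wronskian limit $W_\lambda\to\sqrt{2}/\sigma_+$ as $\lambda\to0$, and identifying the index $\alpha=1/2$ and the constant $\nu=M(\RR_-)=1/b_-$. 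That computation is where the constant $\sigma_+/b_-$ (and the fact that it involves $\sigma_+$ rather than $\sigma_-$) actually comes from; your ``ratio-limit theorem plus reflected Brownian motion heuristic for $L_T(\xi)$'' is the right intuition but not a proof, and tracking the local-time normalizations there is as hard as the Green-kernel computation you are avoiding. The same theorem also delivers, in one stroke, the joint functional convergence of $\bigl(M^-_{nt}/n^{1/4},\,Q^-_{nt}/n^{1/2}\bigr)$ to $\bigl(\sigma_-\sqrt{\nu}\,B^-(\cM_t),\,\nu\cM_t\bigr)$ with $\cM$ a Mittag--Leffler process of index $1/2$; this is what legitimizes your step ``$M^-_T/(\sigma_-\sqrt{Q^-_T})\to\cN^-$'', since a martingale CLT normalized by a bracket that remains genuinely random in the limit is exactly a mixed-normal statement, not a consequence of Proposition~\ref{prop:clt}.

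Second, and more seriously, you explicitly leave the independence of $\cN$, $\cN^+$, $\cN^-$ as ``the technical heart'' without an argument, yet this is part of the claim in \eqref{eq:N1} — note in particular that $\cN^-$ must be independent of the $\abs{\cN}$ appearing in its own denominator. The paper proves it as follows: Knight's theorem gives $M^\pm_t=B^\pm(\sigma_\pm^2Q^\pm_t)$ for a two-dimensional Brownian motion $(B^+,B^-)$; a tightness argument upgrades the marginal limits to joint convergence of $\bigl(B^+_n,B^-_n,n^{-1}Q^+_{n\cdot},n^{-1/2}Q^-_{n\cdot}\bigr)$ in $\cC([0,1],\RR^4)$; and the independence of $\cM$ from $(B^+_\infty,B^-_\infty)$ holds because $\cM$ is the inverse of a pure-jump stable subordinator, hence independent of any continuous limiting martingale; the conclusion then follows by random time change. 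Without carrying out this (or an equivalent joint stable-convergence) argument, the law of the limiting triple is unproven. So your proposal is a correct road map, but it has genuine gaps at both load-bearing steps.
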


\begin{remark}
    This case exhibits two different rates of convergence. This is due 
    to the fact below the threshold that the particle is pushed up, but only behave
    like a Brownian motion when above. Below the threshold, we are in a similar
    situation as for the positive recurrent case, while above it is like the null
    recurrent case.
\end{remark}
%%% end N1

%% T0
\begin{proposition}[Transient case for upward drift \textbf{T0}]
Assume $b_+>0$, $b_-\geq 0$ so that the process $\xi$ is transient
and $\lim_{T\to\infty} \xi_T=+\infty$. 
Then $Q^+_T/T$ converges almost surely to $1$ as $T\to\infty$ and 
\begin{equation}\label{eq:T0}
    \beta^+_T\convas[T\to\infty] b_+
    \text{ and }
    \sqrt{T}(\beta^+_T-b_+)\convl[T\to\infty] \sigma_+\cN^+
\end{equation}
for a unit Gaussian random variable $\cN^+$. 
Let $\ell_0$ be the last passage time to $0$, which is almost surely finite. 
Assume $\xi_0=0$. We have
\begin{equation}
    \label{eq:finite:T0}
    \beta^-_T\ind{T>\ell_0}
    =\cR_{\mathbf{T0}}\ind{T>\ell_0}
    \text{ and }
    \lim_{T\rightarrow \infty}    \beta^-_T=\cR_{\mathbf{T0}}
    \text{ a.s. with }
    \cR_{\mathbf{T0}}\eqdef\frac{L_\infty(\xi)}{2Q^-_{\ell_0}}=\frac{L_\infty(\xi)}{2Q^-_\infty}.
\end{equation}
The density of $\cR_{\mathbf{T0}}$ is given by \eqref{eq:dens:r} below.
The case $b_+\leq 0$, $b_-<0$ is treated by symmetry.
\end{proposition}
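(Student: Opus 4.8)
The plan is to build everything on the transience of $\xi$ towards $+\infty$, which is already established in Section~\ref{sec:regime}: since $b_+>0$ and $b_-\geq 0$, the scale function satisfies $S(+\infty)<+\infty$ while $S(-\infty)=-\infty$, so $\xi$ cannot escape to $-\infty$ and $\lim_{T\to\infty}\xi_T=+\infty$ almost surely. The essential structural consequence is that the last passage time $\ell_0\eqdef\sup\Set{t\geq 0\given \xi_t=0}$ is finite almost surely, and that for every $T>\ell_0$ the path stays strictly positive. Hence the three quantities attached to the negative side, namely $\mpart{\xi_T}$, the negative occupation time $Q^-_T$ and the local time $L_T(\xi)$, are all frozen after $\ell_0$: indeed $Q^-_T=Q^-_{\ell_0}=Q^-_\infty$ and $L_T(\xi)=L_{\ell_0}(\xi)=L_\infty(\xi)$ for $T>\ell_0$, since no negative occupation and no local time at $0$ can be accumulated once the path has left $0$ for good.

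For the positive side I would first note that $Q^+_T+Q^-_T=T$ (the time spent exactly at $0$ having zero Lebesgue measure) together with $Q^-_T\to Q^-_\infty<\infty$ yields $Q^+_T/T\to1$ and $Q^+_\infty=\infty$, so that $\bra{M^+}_T=\sigma_+^2 Q^+_T\to\infty$. Writing $\beta^+_T=b_++M^+_T/Q^+_T$ as in Proposition~\ref{prop:qmle}, the strong law of large numbers for continuous martingales gives $M^+_T/\bra{M^+}_T\to0$ almost surely on $\Set{\bra{M^+}_\infty=\infty}$, whence $M^+_T/Q^+_T\to0$ and $\beta^+_T\to b_+$. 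For the fluctuations, since $\bra{M^+}_T/T=\sigma_+^2 Q^+_T/T\to\sigma_+^2$ is deterministic, the martingale central limit theorem recalled in Section~\ref{sec:aux} gives $M^+_T/\sqrt{T}\convl\sigma_+\cN^+$ with $\cN^+$ a standard Gaussian; multiplying by $T/Q^+_T\to1$ and invoking Slutsky's lemma yields $\sqrt{T}(\beta^+_T-b_+)\convl\sigma_+\cN^+$, which is exactly \eqref{eq:T0}.

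The negative side is then immediate from the freezing. Taking $\xi_0=0$ and $T>\ell_0$, one has $\mpart{\xi_0}=0$ and $\mpart{\xi_T}=0$, so the representation \eqref{estb} of $\beta^-_T$ collapses to $\beta^-_T=\dfrac{L_T(\xi)/2}{Q^-_T}=\dfrac{L_\infty(\xi)}{2Q^-_\infty}=\cR_{\mathbf{T0}}$. This gives at once the identity $\beta^-_T\ind{T>\ell_0}=\cR_{\mathbf{T0}}\ind{T>\ell_0}$, the almost sure limit $\lim_{T\to\infty}\beta^-_T=\cR_{\mathbf{T0}}$, and the equality $Q^-_{\ell_0}=Q^-_\infty$ used in the definition of $\cR_{\mathbf{T0}}$.

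The remaining and most delicate point is the law of $\cR_{\mathbf{T0}}=L_\infty(\xi)/(2Q^-_\infty)$. I would obtain it by computing the joint distribution of the pair $(L_\infty(\xi),Q^-_\infty)$ through a last-exit decomposition at $0$: applying the strong Markov property and Itô's excursion theory for $\xi$ at the threshold, the total local time $L_\infty(\xi)$ accumulated before the single excursion that escapes to $+\infty$ is exponentially distributed, with rate fixed by the probability that a positive excursion never returns to $0$, while $Q^-_\infty$ is, conditionally on $L_\infty(\xi)$, the total duration of the negative excursions collected by the local-time clock, each governed by the law of a Brownian motion with diffusion coefficient $\sigma_-$ and drift $b_-\geq0$. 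Taking the joint Laplace transform in these two functionals and inverting then recovers the density \eqref{eq:dens:r} of the ratio. This excursion-theoretic computation, and not the estimator identities above, is where the real work lies; the main obstacle is the simultaneous control of the local time and the negative occupation time in the presence of the non-negative drift $b_-$.
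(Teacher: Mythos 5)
Your argument is correct and follows the paper's proof almost step for step: transience from the scale function, $Q^+_T/T\to 1$ from $Q^-_\infty<\infty$, the martingale strong law and the martingale CLT of Section~\ref{sec:aux} (Propositions~\ref{prop:convergence} and~\ref{prop:clt} applied to the single component $M^+$) for \eqref{eq:T0}, and the freezing of $\mpart{\xi_T}$, $Q^-_T$ and $L_T(\xi)$ after $\ell_0$ combined with \eqref{estb} for \eqref{eq:finite:T0}. The only point where you depart from the paper is the identification of the law of $\cR_{\mathbf{T0}}$: the paper does not run the excursion decomposition itself but reads the joint Laplace transform $\EE_0[\exp(-\alpha L_\infty(\xi)-\lambda Q^-_\infty)]$ directly off the Pitman--Yor formula (Corollary~5 of \cite{py}) in terms of $\widehat{\psi}$ and $\widehat{\phi}$ computed from the fundamental system, and then inverts it in two stages (exponential law for $L_\infty(\xi)$, inverse-Gaussian-type conditional density for $Q^-_\infty$) to reach \eqref{eq:dens:r}, as carried out in Section~\ref{sub:transient}. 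Your excursion-theoretic route would rederive that same Laplace transform from scratch; it is legitimate and arguably more self-contained, but it is also where your write-up stops at a sketch, and you would need to handle the normalization of the symmetric local time used as the excursion clock (the factor relating $L(\xi)$ to the one-sided local times) to land exactly on \eqref{eq:dens:r}. Citing the known formula, as the paper does, short-circuits that work.
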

%% end T0

%% T1
\begin{proposition}[Transient case for diverging drift \textbf{T1}]
    \label{prop:T1}
Assume $b_+>0$, $b_-<0$ so that the process $\xi$ is transient. Assume that $\xi_0=0$.
Then there exists a Bernoulli random variable $\cB\in\Set{0,1}$  such that  
\begin{gather*}
    \PP\Prb{\cB=1}=1-\PP\Prb{\cB=0}=\frac{\sigma_-b_+}{\sigma_+b_-+\sigma_-b_+}, \\
    \PP_+\Prb*{\frac{Q^+_T}{T}\xrightarrow[T\rightarrow \infty]{} 1}=1
    \text{ and }
    \PP_-\Prb*{\frac{Q^-_T}{T}\xrightarrow[T\rightarrow \infty]{} 1}=1
    \\
    \text{with }
    \PP_+\Prb{\cdot}=\PP\Prb{\cdot\given\cB=1}\text{ and }\PP_-\Prb{\cdot}=\PP\Prb{\cdot\given\cB=0}.
\end{gather*}
On the event $\Set{\cB=1}$ (resp. $\Set{\cB=0}$), $\beta^+_T$ (resp. $\beta^-_T$)
converges almost surely to~$b_+$ (resp.~$b_-$) while $\beta^-_T-b_-$
(resp. $\beta^+_T-b_+$) is the ratio of two a.s. finite random variables.

In addition, for unit Gaussian random variables $\cN^+$ and $\cN^-$, 
\begin{align}\label{eq:T1p}
    \sqrt{T}(\beta_T^+-b_+)&
\convl[T\to\infty]{} \sigma_+ \cN^+
    \text{ under }\PP_+,
    \\
    \sqrt{T}(\beta_T^--b_-)&
\convl[T\to\infty]{} \sigma_- \cN^-
    \text{ under }\PP_-. 
    \label{eq:T1n}
\end{align}
In addition,
\begin{align}\label{eq:finite:T1p}
    \lim_{T\rightarrow \infty}    \beta^-_T=\cR^-_{\mathbf{T1}}
    \text{ a.s. under }\PP_+
    \text{ with }\cR_{\mathbf{T1}}^-=\frac{L_\infty(\xi)}{2Q^-_{\infty}},
    \\
    \lim_{T\rightarrow \infty}    \beta^+_T=-\cR^+_{\mathbf{T1}}
    \text{ a.s.  under }\PP_- 
    \text{ with }\cR_{\mathbf{T1}}^+=\frac{L_\infty(\xi)}{2Q^+_{\infty}}.
    \label{eq:finite:T1n}
\end{align}
The distribution of $\cR^-_{\mathbf{T1}}$ is that of 
of $\cR$ given \eqref{eq:dens:r} below. That of $\cR^+_{\mathbf{T1}}$
is found by symmetry.
\end{proposition}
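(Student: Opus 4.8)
The plan is to convert the transience analysis of Section~\ref{sec:regime} into a clean dichotomy. Since $b_+>0$ and $b_-<0$, the scale function satisfies $S(+\infty)\eqdef\lim_{x\to+\infty}S(x)<\infty$ and $S(-\infty)\eqdef\lim_{x\to-\infty}S(x)>-\infty$, so $\xi_T$ converges almost surely either to $+\infty$ or to $-\infty$. I would set $\cB\eqdef\ind{\xi_T\to+\infty}$ and read off its law from the two-sided exit identity by letting the barriers go to $\pm\infty$,
\[
    \PP\Prb{\cB=1}=\frac{S(0)-S(-\infty)}{S(+\infty)-S(-\infty)},
\]
so that inserting the explicit $S$ of Section~\ref{sec:regime} returns the announced probability. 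With $\PP_\pm$ the conditional laws of the statement, the whole proposition is invariant under the reflection $x\mapsto-x$, which exchanges $(b_+,\sigma_+,+)$ with $(-b_-,\sigma_-,-)$ and swaps $\PP_+\leftrightarrow\PP_-$; hence it suffices to establish the $\PP_+$ assertions.

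Working under $\PP_+$, I would introduce the last passage time $\ell_0\eqdef\sup\Set{t\geq0\given\xi_t=0}$, almost surely finite because $\xi_T\to+\infty$; by continuity $\xi_{\ell_0}=0$ and $\xi_t>0$ for $t>\ell_0$. The quantities carried by the negative half-line are therefore frozen after $\ell_0$: for $T>\ell_0$ one has $Q^-_T=Q^-_\infty<\infty$ and $L_T(\xi)=L_\infty(\xi)<\infty$, while $Q^+_T=T-Q^-_T$, so $Q^+_T/T\to1$ almost surely. This is exactly the configuration of case \textbf{T0}: from $\beta^+_T=b_++M^+_T/Q^+_T$ with $\bra{M^+}_T=\sigma_+^2Q^+_T\to\infty$, the strong law for continuous martingales (valid $\PP$-almost surely, hence $\PP_+$-almost surely) gives $\beta^+_T\convas[T\to\infty]b_+$. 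For $\beta^-_T$ I would use the local-time form \eqref{estb}: since $\xi_0=0$ and $\mpart{\xi_T}=0$ for $T>\ell_0$, it collapses to
\[
    \beta^-_T=\frac{L_T(\xi)/2}{Q^-_T}=\frac{L_\infty(\xi)}{2Q^-_\infty}\qquad(T>\ell_0),
\]
which is constant in $T$. This simultaneously shows that $\beta^-_T$ is a ratio of two almost surely finite, positive random variables (note $Q^-_\infty>0$ a.s., the path leaving $0$ downwards at least once) and that $\lim_T\beta^-_T=\cR^-_{\mathbf{T1}}$ almost surely; the symmetric computation under $\PP_-$, where $\ppart{\xi_T}=0$ eventually, produces the sign in $\lim_T\beta^+_T=-\cR^+_{\mathbf{T1}}$.

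For the central limit theorem \eqref{eq:T1p} the difficulty is that $\PP_+$ conditions on the tail event $\Set{\cB=1}$, under which $M^+$ is no longer a martingale, so the martingale central limit theorem of Section~\ref{sec:aux} cannot be applied verbatim. I see two routes. The first decomposes at $\ell_0$: under $\PP_+$ the shifted process $\eta_t\eqdef\xi_{\ell_0+t}$ is the drift-$b_+$, volatility-$\sigma_+$ diffusion conditioned to stay positive, an $h$-transform whose additional drift is integrable in time, so $\eta_s-b_+s=\sigma_+\widetilde W_s+o(\sqrt s)$; plugging $R^+_T=R^+_{\ell_0}+\eta_{T-\ell_0}$ and $Q^+_T=Q^+_{\ell_0}+(T-\ell_0)$ into $\beta^+_T$ and using $Q^+_T/T\to1$ then gives $\sqrt T(\beta^+_T-b_+)\convl[T\to\infty]\sigma_+\cN^+$. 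The second, closer to the rest of the paper, is to prove the martingale limit in the stable sense under $\PP$ with a limit independent of $\cF_\infty\supseteq\sigma(\cB)$, so that the convergence survives conditioning on $\Set{\cB=1}$; this is the route I would follow, since it reuses the machinery already needed for cases \textbf{E} and \textbf{N1}.

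It remains to identify the law of $\cR^-_{\mathbf{T1}}=L_\infty(\xi)/(2Q^-_\infty)$ with the density \eqref{eq:dens:r}. I would compute the joint law of $(L_\infty(\xi),Q^-_\infty)$ under $\PP_+$ by excursion theory at $0$: running the Itô excursion process until the first excursion that escapes to $+\infty$, so that $L_\infty$ is the corresponding (exponential) local-time level and $Q^-_\infty$ is the value at that level of the subordinator recording the occupation time of the downward excursions. Conditioning on $\Set{\cB=1}$ merely forces the downward excursions to return, and by a Girsanov computation their excursion measure depends on $b_-$ through $b_-^2$ only; feeding these ingredients into the same computation that produces $\cR_{\mathbf{T0}}$ in case \textbf{T0} yields the density \eqref{eq:dens:r}, with the parameters of $\cR^+_{\mathbf{T1}}$ following from the reflection symmetry. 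As opposed to the now-routine martingale asymptotics, the delicate points are precisely this excursion computation under conditioning and the justification of the conditional central limit theorem above.
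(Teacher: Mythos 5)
Your skeleton is the paper's: the dichotomy comes from the finiteness of $S(\pm\infty)$ and the two-sided exit probability (the paper cites \cite[Proposition~5.5.22]{ks}, which is exactly your barrier-limit identity), the negative-side quantities freeze after the last passage time $\ell_0$ so that the \textbf{T0} arguments give the almost sure statements and the identity $\beta^-_T=L_\infty(\xi)/(2Q^-_\infty)$ for $T>\ell_0$, and the Gaussian limit is obtained by a stable martingale CLT that survives conditioning on $\Set{\cB=1}$. Two remarks. First, on the CLT: your route (b) is indeed the paper's, but the phrase \textquote{a limit independent of $\cF_\infty$} cannot be taken literally. Under $\PP$ the limit of $M^+_T/\sqrt{T}$ is $\sigma_+\sqrt{\cB}\,\cN$, which vanishes on $\Set{\cB=0}$ (there $\bra{M^+}_\infty<\infty$ and $M^+_T$ converges a.s. to a finite, non-Gaussian limit), so no unconditional Gaussian limit independent of $\cF_\infty$ exists. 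The tool you actually need is the \emph{restricted} stable martingale CLT, which the paper takes from \cite[Corollary~2.3]{crimaldi}: the normalized martingale converges stably to a standard Gaussian under $\PP\Prb{\cdot\given\cB=1}$, i.e.\ on the event where $\bra{M^+}_T/T$ has a positive limit; combined with $Q^+_T/T\to 1$ under $\PP_+$ this yields \eqref{eq:T1p}. Second, for the law of $\cR^-_{\mathbf{T1}}$ you propose excursion theory, whereas the paper reads the joint Laplace transform of $(L_\infty(\xi),Q^-_\infty)$ off the fundamental system, namely $\EE_0[\exp(-\alpha L_\infty(\xi)-\lambda Q^-_\infty)]=(\widehat{\psi}(0)+\widehat{\phi}(0))/(\alpha+\widehat{\phi}(0)+\widehat{\psi}(\lambda))$ from \cite{py}, and inverts it first in $\alpha$ (showing $L_\infty(\xi)$ is exponential) and then in $\lambda$ to obtain the joint density and hence \eqref{eq:dens:r}. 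The two are morally equivalent ($Q^-_\infty$ is a subordinator evaluated at the exponential local-time level), but your version defers all the quantitative content --- the downward excursion measure, the effect of conditioning on $\Set{\cB=1}$, and the final change of variables --- to an unexecuted computation, so as written it does not yet establish \eqref{eq:dens:r}; the Laplace-inversion route is the one that actually produces the formula.
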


%%%%%%%%%%%%%%%%%%%%%%%%%%%%%%%%%%%%%%%%%%%%%%%%%%%%%%%%%%%%%%%%%%%%%%

\section{Auxiliary tools}

\label{sec:aux}

In this section, we give first some results on a martingale central limit theorem
that will be used constantly. 
To deal with the transient or null recurrent cases, we make use 
of some analytic properties of one-dimensional diffusions.

%%%%%%%%%%%%%%%%%%%%%%%%%%%%%%%%%%%%%%%%%%%%%%%%%%%%%%%%%%%%%%%%%%%%%%
\subsection{Limit theorems on martingales}

The following result follows immediately from \cite[Proposition~1, p.~148; Theorem~1, p.~150]{lepingle78a}. 
% NOTE a weaker statement is given in \cite[Lemma~17.4, p.~235]{lipster_II}.

\begin{proposition}[A criterion for convergence] 
    \label{prop:convergence}
Under the true probability $\PP$, 
\begin{enumerate}[leftmargin=1em,label=\textnormal{(\roman*)},topsep=0pt,noitemsep]
    \item as $T\to\infty$, $\beta_T^+$ (resp. $\beta^-_T$) converges a.s. to
	$b_+$ (resp. $b_-$) on the event $\Set{Q^+_\infty=+\infty}$ (resp.
	$\Set{Q^-_\infty=+\infty}$).
    \item as $T\to\infty$, $M^+_T$ (resp. $M^-_T$) converges a.s. to a finite value on 
	the event $\Set{Q^+_\infty<+\infty}$ (resp.  $\Set{Q^-_\infty<+\infty}$).
	In other words, $\beta^\pm_T$ is not a consistent estimator on $\Set{Q^\pm_\infty<+\infty}$.
    \end{enumerate}
\end{proposition}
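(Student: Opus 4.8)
The plan is to reduce both statements to the classical strong law of large numbers for continuous local martingales, in the form due to Lépinglé cited just before the proposition. Recall that from \eqref{eq:RM} we have the decomposition $\beta^\pm_T = b_\pm + M^\pm_T/Q^\pm_T$, where $M^\pm$ is a continuous martingale with quadratic variation $\bra{M^\pm}_T = \sigma_\pm^2 Q^\pm_T$. Thus both claims are really statements about the asymptotic behavior of a continuous local martingale $M^\pm$ governed by the growth of its own bracket $\bra{M^\pm}$, which is (up to the constant $\sigma_\pm^2$) exactly the occupation time $Q^\pm$. I would treat the $+$ and $-$ cases identically and write the argument once.

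For part (i), I would work on the event $\Set{Q^+_\infty = +\infty}$, on which $\bra{M^+}_T = \sigma_+^2 Q^+_T \to \infty$ a.s. The strong law of large numbers for continuous martingales (Lépingle, Proposition~1 p.~148) then gives $M^+_T/\bra{M^+}_T \to 0$ a.s.\ on this event. Since $\bra{M^+}_T/Q^+_T = \sigma_+^2$ is constant, this is the same as $M^+_T/Q^+_T \to 0$ a.s., and hence $\beta^+_T = b_+ + M^+_T/Q^+_T \to b_+$ a.s.\ on $\Set{Q^+_\infty = +\infty}$. The point to be careful about is that these martingale limit theorems are typically stated with convergence on the whole space under a divergence hypothesis, whereas here divergence of the bracket holds only on an event of possibly non-full measure; I would invoke the localized version (convergence on the event where the bracket diverges), which is precisely what Theorem~1 p.~150 of \cite{lepingle78a} provides, so no extra work is needed beyond citing the correct statement.

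For part (ii), I would work on the complementary event $\Set{Q^+_\infty < +\infty}$, on which $\bra{M^+}_\infty = \sigma_+^2 Q^+_\infty < +\infty$ a.s. A continuous local martingale converges a.s.\ to a finite limit on the event where its quadratic variation is finite; this is again exactly the content of the cited result of \cite{lepingle78a}. Hence $M^+_T$ converges a.s.\ to a finite random variable on $\Set{Q^+_\infty < +\infty}$. Consequently $M^+_T/Q^+_T$ converges to a finite, in general nonzero, limit (the ratio of two a.s.\ finite quantities, with $Q^+_\infty>0$ whenever the process spends positive time on $\RR_+$), so that $\beta^+_T$ does not converge to $b_+$, which is the asserted failure of consistency. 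I expect the only genuine subtlety is the bookkeeping between the two disjoint events and making sure the localized versions of the martingale theorems are applied on each; the algebraic reduction through \eqref{eq:RM} is immediate and the rest is a direct appeal to \cite[Proposition~1, p.~148; Theorem~1, p.~150]{lepingle78a}.
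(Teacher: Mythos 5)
Your proof is correct and follows exactly the route the paper intends: the paper itself gives no further argument beyond citing \cite[Proposition~1, p.~148; Theorem~1, p.~150]{lepingle78a}, and your reduction via $\beta^\pm_T = b_\pm + M^\pm_T/Q^\pm_T$ with $\bra{M^\pm}_T=\sigma_\pm^2 Q^\pm_T$, applying the localized strong law on $\Set{Q^\pm_\infty=+\infty}$ and the a.s.\ convergence on $\Set{\bra{M^\pm}_\infty<+\infty}$, is precisely that argument spelled out.
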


We now state an instance of a Central Limit theorem for martingales
which follows from \cite{crimaldi}. This theorem will be used to deal
with the cases \textbf{E}, \textbf{N1} and~\textbf{T1}.
Let us start by recalling the notion of stable convergence
introduced by A.~Rényi~\cite{renyi,js}.

\begin{definition}[{Stable convergence}]
    A sequence $(X_n)_{n\in\NN}$ on a probability space $(\Omega,\cF,\PP)$
    is said to \emph{converge stably} with respect to a $\sigma$-algebra $\cG\subset\cF$
    if for any bounded, continuous function $f$ and any bounded, $\cG$-measurable
    random variable~$Y$,
    \begin{equation*}
	\EE\Prb{f(X_n)Y}\xrightarrow[n\to\infty]{} \EE\Prb{f(X)Y}.
    \end{equation*}
\end{definition}

\begin{proposition}[A central limit theorem for martingales]
    \label{prop:clt}
Let $(\Omega,\cF,\PP)$ be the underlying probability space of the process $\xi$
with a filtration $(\cF_t)_{t\geq 0}$.
If for some constants $c_+,c_->0$, 
\begin{equation*}
    \frac{Q^+_T}{T}\convp[T\to+\infty] c_+\text{ and }\frac{Q^-_T}{T}\convp[T\to+\infty]c_-, 
\end{equation*}
then for the martingales $M^\pm$ defined by \eqref{eq:def:M},
\begin{equation}
    \label{eq:3}
    \left(\frac{M_T^+}{\sqrt{T}},\frac{M_T^-}{\sqrt{T}}\right)
    \xrightarrow[T\to\infty]{\cF_\infty\mathrm{-stably}} 
    (\sigma_+\sqrt{c_+}\cN^+,\sigma_-\sqrt{c_-}\cN^-),
\end{equation}
on a probability space $(\Omega',\cF',\PP')$ extending $(\Omega,\cF,\PP)$
and containing  two independent unit Gaussian random variables $\cN^+$, $\cN^-$,
themselves independent from $\xi$.
In addition, 
\begin{equation}
    \label{eq:4}
    \sqrt{T}\left(\frac{M_T^+}{Q^+_T},\frac{M_T^-}{Q^-_T}\right)
    \xrightarrow[T\to\infty]{\cF_\infty\mathrm{-stably}} 
    \left(\frac{\sigma_+}{\sqrt{c_+}}\cN^+,\frac{\sigma_-}{\sqrt{c_-}}\cN^-\right),
\end{equation}
\end{proposition}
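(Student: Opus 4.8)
The plan is to derive \eqref{eq:3} from the stable central limit theorem for continuous martingales of \cite{crimaldi}, applied to the two-dimensional martingale $(M^+,M^-)$, and then to obtain \eqref{eq:4} from \eqref{eq:3} by a Slutsky-type argument built on the convergence $Q^\pm_T/T\convp[T\to\infty] c_\pm$.

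First I would record the bracket computations already available: $\bra{M^\pm}_T=\sigma_\pm^2 Q^\pm_T$ and $\bra{M^+,M^-}_T=0$ for every $T$. Dividing by $T$ and using the hypothesis $Q^\pm_T/T\convp[T\to\infty] c_\pm$, the normalized quadratic-covariation matrix of $(M^+_T/\sqrt T,M^-_T/\sqrt T)$ converges in probability, as $T\to\infty$, to the deterministic diagonal matrix $\operatorname{diag}(\sigma_+^2 c_+,\sigma_-^2 c_-)$. This is exactly the input required by the martingale CLT: since the martingales are continuous, the conditional Lindeberg (jump-negligibility) condition is vacuous, so convergence of the bracket is the only substantial hypothesis. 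Applying \cite{crimaldi} then yields that $(M^+_T/\sqrt T,M^-_T/\sqrt T)$ converges $\cF_\infty$-stably to a centred Gaussian vector with covariance $\operatorname{diag}(\sigma_+^2 c_+,\sigma_-^2 c_-)$, realized on an extension of $(\Omega,\cF,\PP)$ and independent of $\cF_\infty$. As the limiting covariance is deterministic and diagonal, the limit is the pair $(\sigma_+\sqrt{c_+}\,\cN^+,\sigma_-\sqrt{c_-}\,\cN^-)$ with $\cN^+,\cN^-$ independent standard Gaussians independent of $\xi$, which is \eqref{eq:3}.

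To pass from \eqref{eq:3} to \eqref{eq:4}, I would write
\begin{equation*}
\sqrt T\,\Paren*{\frac{M^+_T}{Q^+_T},\frac{M^-_T}{Q^-_T}}
=\Paren*{\frac{M^+_T}{\sqrt T}\cdot\frac{T}{Q^+_T},\ \frac{M^-_T}{\sqrt T}\cdot\frac{T}{Q^-_T}},
\end{equation*}
and replace the deterministic normalizers by the observable ones using $T/Q^\pm_T\convp[T\to\infty] 1/c_\pm$, a constant (here $c_\pm>0$ is used). Since multiplying a stably convergent sequence by a sequence converging in probability to a constant preserves stable convergence and multiplies the limits, the right-hand side converges $\cF_\infty$-stably to $(\sigma_+\sqrt{c_+}/c_+\,\cN^+,\ \sigma_-\sqrt{c_-}/c_-\,\cN^-)=(\tfrac{\sigma_+}{\sqrt{c_+}}\cN^+,\tfrac{\sigma_-}{\sqrt{c_-}}\cN^-)$, which is \eqref{eq:4}.

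The main obstacle is the careful verification of the precise hypotheses of the stable CLT in \cite{crimaldi}, in particular any nesting/asymptotic condition on the filtration, and the point that convergence of the brackets \emph{at the terminal time} suffices, so that no full functional convergence of $\bra{M^\pm}_{uT}/T$ on $u\in[0,1]$ is needed. I would also make explicit that the off-diagonal bracket being identically zero is what upgrades the joint limit to a pair of \emph{independent} Gaussians rather than merely uncorrelated ones, which is legitimate precisely because the limit vector is Gaussian. The underlying intuition, which could in principle replace the citation, is the Dambis--Dubins--Schwarz representation $M^\pm_T=\beta^\pm_{\sigma_\pm^2 Q^\pm_T}$ combined with the mixing convergence of $\beta_s/\sqrt s$ to a standard Gaussian as $s\to\infty$.
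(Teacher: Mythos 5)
Your proposal is correct and follows essentially the same route as the paper: normalize the bracket matrix $\operatorname{diag}(\sigma_+^2Q^+_T,\sigma_-^2Q^-_T)$ by $1/T$, invoke the stable martingale CLT of \cite{crimaldi} (Theorem~2.2) to get \eqref{eq:3}, and then deduce \eqref{eq:4} from the identity $\sqrt{T}M^\pm_T/Q^\pm_T=(T/Q^\pm_T)\cdot(M^\pm_T/\sqrt{T})$ together with the fact that stable convergence is preserved under multiplication by $\cF_\infty$-measurable factors converging in probability. Your additional remarks on the vanishing cross-bracket and the Dambis--Dubins--Schwarz picture are consistent with, and slightly more explicit than, the paper's argument.
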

\begin{proof}
    Set 
    \begin{equation*}
    a_T\eqdef\begin{bmatrix} 1/\sqrt{T} & 0 \\ 0 & 1/\sqrt{T}\end{bmatrix}
	\text{ and }
	q_T=\bra{M,M}_T=\begin{bmatrix} \sigma_+^2 Q_T^+ & 0 \\ 0 & \sigma_-^2 Q_T^-\end{bmatrix}.
    \end{equation*}
    Thus, 
    \begin{equation}
    \label{eq:5bis}
a_T q_T a_T'=
\begin{bmatrix}
    \sigma_+^2\frac{Q^+_T}{T} & 0 \\
    0 & \sigma_-^2\frac{Q^-_T}{T}
\end{bmatrix}
\convp[T\to\infty]
\begin{bmatrix} c_+\\c_-\end{bmatrix}.
    \end{equation}
    Theorem~2.2 in \cite{crimaldi} yields \eqref{eq:3}. Besides, 
    \begin{equation}
	\label{eq:5}
	\sqrt{T}\frac{M^\pm_T}{Q^\pm_T}=\frac{T}{Q^\pm_T}\times \frac{M^\pm_T}{\sqrt{T}}.
    \end{equation}
    If a sequence $(X_n)_n$ converges $\cF_\infty$-stably and a sequence $(Y_n)_n$ 
    of $\cF_\infty$-measurable
    random variables converges in probability, then $(X_n,Y_n)_n$ converges $\cF_\infty$-stably.
    Using the property in \eqref{eq:5} and \eqref{eq:5bis} yields  \eqref{eq:4}.
\end{proof}

%%%%%%%%%%%%%%%%%%%%%%%%%%%%%%%%%%%%%%%%%%%%%%%%%%%%%%%%%%%%%%%%%%%%%%

\subsection{The fundamental system}

Along with the characterization through the scale function and the speed
measure, much information on the process 
can be read from the so-called \emph{fundamental system}
\cite{feller,ito,rogers2}:
For any $\lambda>0$, there exists some functions $\phi_\lambda$ and $\psi_\lambda$ such that 
\begin{itemize}[noitemsep,topsep=0pt]
    \item $\psi_\lambda$ and $\phi_\lambda$ are continuous, positive from $\RR$ to $\RR$ with $\phi_\lambda(0)=\psi_\lambda(0)=1$.
    \item $\psi_\lambda$ is increasing with $\lim_{x\to-\infty}\psi_\lambda(x)=0$, $\lim_{x\to\infty}\psi_\lambda(x)=+\infty$.
    \item $\phi_\lambda$ is decreasing with $\lim_{x\to-\infty}\phi_\lambda(x)=+\infty$, $\lim_{x\to\infty}\phi_\lambda(x)=0$.
    \item $\phi_\lambda$ and $\psi_\lambda$ are solutions to $\cL f=\lambda f$.
\end{itemize}

In the case of piecewise constant coefficients with one discontinuity at $0$, 
these solutions may be computed as linear
combinations of the minimal functions for constant coefficients. Using 
the fact that $\phi$, $\phi'$, $\psi$ and $\psi'$ are continuous at $0$, 
\begin{align}
    \label{increasingsolution}
    \psi_\lambda(x)&=\begin{cases}
    \exp\left(x\frac{-b_-+\sqrt{b_-^2+2\sigma_-^2\lambda}}{\sigma_-^2}\right)&\text{ if }x<0\\
    \kappa_+ \exp\left(x\frac{-b_++\sqrt{b_+^2+2\sigma_+^2\lambda}}{\sigma_+^2}\right)
    + \delta_+ \exp\left(x\frac{-b_+-\sqrt{b_+^2+2\sigma_+^2\lambda}}{\sigma_+^2}\right)&\text{ if }x\geq0,
\end{cases}
    \\
    \label{decreasingsolution}
    \phi_\lambda(x)&=
    \begin{cases}
	\kappa_-\exp\left(x\frac{-b_--\sqrt{b_-^2+2\sigma_-^2\lambda}}{\sigma_-^2}\right)
	+\delta_-\exp\left(x\frac{-b_-+\sqrt{b_-^2+2\sigma_-^2\lambda}}{\sigma_-^2}\right)&\text{ if }x<0,\\
	\exp\left(x\frac{-b_+-\sqrt{b_+^2+2\sigma_+^2\lambda}}{\sigma_+^2}\right)&\text{ if }x\geq0
    \end{cases}
\end{align}
with 
\begin{align*}
    \kappa_+&\eqdef
\frac{-b_-\sigma_+^2 + b_+\sigma_-^2 + \sigma_-^2\sqrt{b_+^2 + 2\lambda\sigma_+^2} + \sigma_+^2\sqrt{b_-^2 + 2\lambda\sigma_-^2}}{2\sigma_-^2\sqrt{ b_+^2 + 2\lambda\sigma_+^2 }},\\
\delta_+&\eqdef
\frac{b_-\sigma_+^2 - b_+\sigma_-^2 + \sigma_-^2\sqrt{b_+^2 + 2\lambda\sigma_+^2} - \sigma_+^2\sqrt{b_-^2 + 2\lambda\sigma_-^2}}{2\sigma_-^2\sqrt{b_+^2 + 2\lambda\sigma_+^2}},\\
\kappa_-&\eqdef
\frac{-b_-\sigma_+^2 + b_+\sigma_-^2 - \sigma_-^2\sqrt{b_+^2 + 2\lambda\sigma_+^2} + \sigma_+^2\sqrt{b_-^2 + 2\lambda\sigma_-^2}}{2\sigma_+^2\sqrt{b_-^2 + 2\lambda\sigma_-^2}},\\
\delta_-&\eqdef
\frac{b_-\sigma_+^2 - b_+\sigma_-^2 + \sigma_-^2\sqrt{b_+^2 + 2\lambda\sigma_+^2} + \sigma_+^2\sqrt{ b_-^2 + 2\lambda\sigma_-^2 }}{2\sigma_+^2\sqrt{b_-^2 + 2\lambda\sigma_-^2}}.
\end{align*}

We also define the quantities \cite{py}
\begin{align*}
    \widehat{\psi}(\lambda)&\eqdef\frac{1}{2}\frac{\psi'_\lambda(0)}{\psi_\lambda(0)}
    =\frac{-b_-+\sqrt{b_-^2+2\sigma_-^2\lambda}}{2\sigma_-^2}\geq 0\\
    \text{ and }
    \widehat{\phi}(\lambda)&\eqdef-\frac{1}{2}\frac{\phi'_\lambda(0)}{\phi_\lambda(0)}
    =
\frac{b_++\sqrt{b_+^2+2\sigma_+^2\lambda}}{2\sigma_+^2}\geq 0.
\end{align*}
In particular,
\begin{equation*}
    \widehat{\psi}(0)=0\text{ and }\widehat{\phi}(0)=\frac{b_+}{\sigma_+^2}
    \text{ when $b_-\geq 0$ and $b_+\geq 0$}.
\end{equation*}

%%%%%%%%%%%%%%%%%%%%%%%%%%%%%%%%%%%%%%%%%%%%%%%%%%%%%%%%%%%%%%%%%%%%%%

%%%%%%%%%%%%%%%%%%%%%%%%%%%%%%%%%%%%%%%%%%%%%%%%%%%%%%%%%%%%%%%%%%%%%%
\subsection{Last passage time and occupation time for the transient process}
\label{sub:transient}

When $\xi$ is a transient process, the last passage time 
$\ell_0=\sup\Set{t\geq 0\given \xi_t=0}$
of~$\xi$ at~$0$ is almost surely finite. 
Its Laplace transform is (See (53) in \cite{py}): 
\begin{equation*}
    \EE_0[\exp(-\lambda \ell_0)]
    =
    \frac{\widehat{\psi}(0)+\widehat{\phi}(0)}{
    \widehat{\psi}(\lambda)+\widehat{\phi}(\lambda)}.
\end{equation*}
Let us now assume that $b_+>0$ and $b_-\geq 0$. This is the 
transient case \textbf{T0} where the process ends up almost surely in the positive semi-axis.
Thus, $Q^-_{\ell_0}=Q^-_{\infty}$ and $L_{\ell_0}(\xi)=L_{\infty}(\xi)$.

Let us write $\widehat{b}_\pm\eqdef b_\pm/\sigma_\pm^2$.
From Corollary~5 in \cite{py},
\begin{equation*}
    \EE_0[\exp(-\alpha L_\infty(\xi)-\lambda Q_{\infty}^-)]
    =
    \frac{\widehat{\psi}(0)+\widehat{\phi}(0)}{%
	\alpha+
    \widehat{\phi}(0)+\widehat{\psi}(\lambda)}
    =\frac{\widehat{b}_+}{\alpha+\widehat{b}_+ -\frac{\widehat{b}_-}{2}
+\frac{1}{2\sigma_-^2}\sqrt{b_-^2+2\sigma_-^2\lambda}}.
\end{equation*}
Let $p_{L_\infty(\xi)}(t)$ be the density of $L_\infty(\xi)$.
Setting $\lambda=0$, we see that $L_\infty(\xi)$ is distributed according to an exponential distribution
of rate $\widehat{b}_+$. Thus, $p_{L_\infty(\xi)}(t)=\widehat{b}_+\exp(-\widehat{b}_+t)$. 
A conditioning shows that 
\begin{equation*}
    \EE_0[\exp(-\alpha L_\infty(\xi)-\lambda Q_{\infty}^-)]
=\int_{0}^{+\infty} \exp(-\alpha t)
\EE_0\Prb*{\exp(-\lambda Q_{\infty}^-)\given L_\infty(\xi)=t}
    p_{L_{\infty}}(t)\vd t.
\end{equation*}

By inverting the Laplace transform with respect to $\alpha$, 
since $p_{L_{\infty}}(t)=\widehat{b}_+\exp(-\widehat{b}_+t)$, 
\begin{equation*}
    \EE_0\Prb*{\exp(-\lambda Q_{\infty}^-)\given L_\infty(\xi)=t}
    =
    \exp\left(
	-t\left(
	    -\frac{\widehat{b}_-}{2}
	    +\frac{1}{\sqrt{2}\sigma_-}\sqrt{\frac{b_-^2}{2\sigma_-^2}+\lambda}
	\right)
    \right).
\end{equation*}
Inverting the latter Laplace transform with respect to $\lambda$, 
the density $p_{Q_{\infty}^-}(s|t)$ of $Q^-_{\infty}$ given $\Set{L_\infty(\xi)=t}$ is 

\begin{equation*}
	p_{Q_{\infty}^-}(s|t)
    =
    \frac{t}{\sigma_-2\sqrt{2\pi}s^{3/2}}
    \exp\left(
	\frac{tb_-}{2\sigma_-^2}
	-
	\frac{b_-^2 s}{2\sigma_-^2}
	-
	\frac{t^2}{8 \sigma_-^2 s}
    \right).
\end{equation*}

Hence, the distribution $p_{(Q^-_{\infty},L_\infty(\xi))}(s,t)$ of the pair $(Q^-_{\infty},L_\infty(\xi))$ is
\begin{equation*}
    p_{(Q^-_{\infty},L_\infty(\xi))}(s,t)
    =
    \frac{t b_+}{\sigma_+^2\sigma_-2\sqrt{2\pi}s^{3/2}}
    \exp\left(
	\left(\frac{b_-}{2\sigma_-^2}-\frac{b_+}{\sigma_+^2}\right)t
	-
	\frac{b_-^2 s}{2\sigma_-^2}
	-
	\frac{t^2}{8 \sigma_-^2 s}
    \right).
\end{equation*}

The density $p_{\cR_{\mathbf{T0}}}(r)$ of the random variable 
$\cR_{\mathbf{T0}}\eqdef L_\infty(\xi)/2Q^-_{\infty}$
is then  
\begin{multline*}
    p_{\cR_{\mathbf{T0}}}(r)=2\int_0^{+\infty} s \cdot p_{(Q^-_{\infty},L_\infty(\xi))}\left(s,2rs\right)\vd s
\\
=
\frac{rb_+}{\sigma_+^2\sigma_-\sqrt{2}} \bigg(\frac{2rb_+}{\sigma_+^2}+\frac{(r-b_-)^2}{2\sigma_-^2}\bigg)^{-3/2},\ r>0.
\end{multline*}

The distribution of $L_\infty(\xi)/2Q^+_{\infty}$ when $b_-<0$, $b_+\leq 0$ is found by symmetric arguments.

Let us now assume that $b_+>0$ and $b_-< 0$. This is the 
transient case \textbf{T1} where the process can end up in both semi-axis. 
The Laplace transform is 
\begin{equation*}
    \EE_0[\exp(-\alpha L_\infty(\xi)-\lambda Q_{\infty}^-)]
    =
    \frac{\widehat{\psi}(0)+\widehat{\phi}(0)}{%
	\alpha+
    \widehat{\phi}(0)+\widehat{\psi}(\lambda)}
    =\frac{\widehat{b}_+ - \widehat{b}_-}{\alpha+\widehat{b}_+ -\frac{\widehat{b}_-}{2}
+\frac{1}{2\sigma_-^2}\sqrt{b_-^2+2\sigma_-^2\lambda}}.
\end{equation*}
With analogous computations as before we get that the density $p_{\cR^-_{\mathbf{T1}}}(r)$ 
    of $\cR^-_{\mathbf{T1}}$ is
\begin{equation*}
    p_{\cR^-_{\mathbf{T1}}}(r)
=\frac{r}{\sigma_-\sqrt{2}} \bigg(\frac{b_+}{\sigma_+^2}-\frac{b_-}{\sigma_-^2}\bigg)
\bigg(\frac{2rb_+}{\sigma_+^2}+\frac{(r-b_-)^2}{2\sigma_-^2}\bigg)^{-3/2},\ r>0.
\end{equation*}
Considering also the previous case, we can write the following formula, holding
for the density of $\cR=\cR_{\mathbf{T0}}$ or $\cR=\cR_{\mathbf{T1}}^-$ in both
cases \textbf{T0} and \textbf{T1}:
\begin{equation}
    \label{eq:dens:r}
    p_{\cR}(r)
=\frac{r}{\sigma_-\sqrt{2}}\bigg(\frac{b_+}{\sigma_+^2}+\frac{\mpart{b_-}}{\sigma_-^2}\bigg)
 \bigg(\frac{2rb_+}{\sigma_+^2}+\frac{(r-b_-)^2}{2\sigma_-^2}\bigg)^{-3/2},\ r>0.
\end{equation}
Notice that this is the density of a positive random variable which is not
integrable. This gives the limit behavior of the estimator $\beta^-_T$ of
$b_-$. The behavior of $\beta_T^+$ in the corresponding cases can be found by
symmetric arguments.

%%%%%%%%%%%%%%%%%%%%%%%%%%%%%%%%%%%%%%%%%%%%%%%%%%%%%%%%%%%%%%%%%%%%%%
%%%%%%%%%%%%%%%%%%%%%%%%%%%%%%%%%%%%%%%%%%%%%%%%%%%%%%%%%%%%%%%%%%%%%%
\section{Proofs of the asymptotic behavior of the estimators}

\label{sec:proofs}

%%%%%%%%%%%%%%%%%%%%%%%%%%%%%%%%%%%%%%%%%%%%%%%%%%%%%%%%%%%%%%%%%%%%%%
%%%%%%%%%%%%%%%%%%%%%%%%%%%%%%%%%%%%%%%%%%%%%%%%%%%%%%%%%%%%%%%%%%%%%%

%%%%%%%%%%%%%%%%%%%%%%%%%%%%%%%%%%%%%%%%%%%%%%%%%%%%%%%%%%%%%%%%%%%%%%
\subsection{Asymptotic behavior for the ergodic case  (\textbf{E})}

\label{sec:conv:ergodic}

The ergodic case is the most favorable one. The process $\xi$ is ergodic, 
so that for any bounded, measurable function $f$, 
$\frac{1}{T}\int_0^T f(\xi_s)\vd s$ converges almost surely to $\int f(x)\frac{m(x)}{M(\RR)}\vd x$. % REF

With the explicit expression of $M$ that follows from \eqref{eq:speend-n-scale},  
\begin{equation*}
    M(\RR_+)=\frac{-1}{b_+},\ M(\RR_-)=\frac{1}{b_-}
    \text{ and }M(\RR)=\frac{\abs{b_+b_-}}{\abs{b_-}+\abs{b_+}}.
\end{equation*}
From the ergodic theorem, since $Q^\pm_t=\int_0^t \ind{\pm\xi_s\geq 0}\vd s$, 
\begin{equation*}
    \frac{Q^\pm_T}{T}\convas[T\to\infty]\frac{M(\RR_\pm)}{M(\RR)}
\end{equation*}
so that 
\begin{equation}
    \label{eq:E:1}
\frac{Q^+_T}{T}\convas[T\to\infty]\frac{\abs{b_-}}{\abs{b_-}+\abs{b_+}}
\text{ and }
\frac{Q^-_T}{T}\convas[T\to\infty]\frac{\abs{b_+}}{\abs{b_-}+\abs{b_+}}.
\end{equation}

The rate of convergence follows from Proposition~\ref{prop:clt} and \eqref{eq:E:1}.

%%%%%%%%%%%%%%%%%%%%%%%%%%%%%%%%%%%%%%%%%%%%%%%%%%%%%%%%%%%%%%%%%%%%%%
\subsection{Asymptotic behavior for the null recurrent case with vanishing drift (\textbf{N0})}

\label{sec:N0}

When $b_-=b_+=0$, the process $\xi$ is an Oscillating Brownian motion
(OBM, introduced first in \cite{kw}, see also \cite{LP}).
Supposing $\xi_0=0$, using the scaling relation \cite[Remark 3.7]{LP}, for any $T>0$, 
\begin{equation*}
    \left(\frac{\ppart{\xi_T}}{\sqrt{T}},\frac{\mpart{\xi_T}}{\sqrt{T}},
    \frac{L_T(\xi)}{\sqrt{T}},\frac{Q^+_T}{T}\right)\eqlaw
    (\ppart{\xi_1},\mpart{\xi_1},L_1(\xi),Q^+_1).
\end{equation*}
Therefore, 
\begin{equation*}
\sqrt{T}
\begin{pmatrix}
\beta_T^+ \\
\beta_T^-
\end{pmatrix}
=
\begin{pmatrix}
    \dfrac{\ppart{\xi_T}/\sqrt{T} - \frac{1}{2} L_T(\xi)/\sqrt{T}
}{Q^+_T/T} \\
\dfrac{\frac{1}{2} L_T(\xi)/\sqrt{T}
    -\mpart{\xi_T}/\sqrt{T}
}{Q^-_T/T}
\end{pmatrix}
\eqlaw
\begin{pmatrix}
\beta^+_1 \\
\beta^-_1
\end{pmatrix}
=
\begin{pmatrix}
    \dfrac{\ppart{\xi_1}- \frac{1}{2} L_1(\xi)}{Q^+_1} \\
    \dfrac{\frac{1}{2} L_1(\xi)-\mpart{\xi_1}}{Q^-_1}
\end{pmatrix}.
\end{equation*}
We recall now that $X=\Phi(\xi)\eqdef \xi/\sigma(\xi)$ is a Skew Brownian motion~\cite{LP,etore2}.
An explicit formula for the density
for the position a Skew Brownian motion, its local time and
its occupation time 
is known \cite{abtww,gairat}. Since the transform~$\Phi$ is piecewise
linear, one easily recover the one of an OBM, its local and occupation times.
Hence, the density of $(\xi_1,L_1(\xi),Q^+_1(\xi))$ is 
\begin{multline}
p_{(\xi_1,L_1(\xi),Q^+_1)}(\rho,\lambda,\tau)\\
=
\begin{cases}
\dfrac{(\lambda/2+\rho)\lambda/2 }{2\pi \sigma_- \sigma_+^3 (1-\tau)^{3/2}\tau^{3/2}} 
\exp\left(-\dfrac{(\lambda/2)^2}{2\sigma_-^2(1-\tau)} -\dfrac{(\lambda/2+\rho)^2}{2\sigma_+^2\tau}\right)
&\text{ for }\rho\geq 0,\\[15pt]
\dfrac{(\lambda/2-\rho)\lambda/2 }{2\pi \sigma_+ \sigma_-^3 (1-\tau)^{3/2}\tau^{3/2}} 
\exp\left(-\dfrac{(\lambda/2)^2}{2\sigma_+^2\tau} -\dfrac{(\lambda/2-\rho)^2}{2\sigma_-^2(1-\tau)}\right)&\text{ for }\rho<0.
\end{cases}
\end{multline}
The change of variable in the density suggested by
\begin{equation*}
    (\beta^+_1,\beta^-_1,Q^+_1)=\left( \frac{\ppart{\xi_1}-L_1(\xi)/2}{Q^+_1},\frac{L_1(\xi)/2-\mpart{\xi_1}}{1-Q^+_1},Q^+_1\right)
\end{equation*}
gives 
\begin{multline*}
p_{(\beta^+_1,\beta^-_1,Q^+_1)}(a,b,\delta)\\
=
2\delta(1-\delta) p_{(\xi_1,L_1(\xi),Q^+_1)}(a\delta+b(1-\delta),
|a\delta+b(1-\delta)|-a\delta+b(1-\delta)
,\delta)
\end{multline*}
and then, since $Q^+_1\in[0,1]$,  
\begin{multline}
    \label{explicitdensitynodrift}
p_{(\beta^+_1,\beta^-_1)}(a,b)\\
= 
\int_0^1
2\delta(1-\delta) p_{(\xi_1,L_1(\xi),Q^+_1)}
(a\delta+b(1-\delta),
|a\delta+b(1-\delta)|-a\delta+b(1-\delta)
,\delta) \vd\delta.
\end{multline}

%%%%%%%%%%%%%%%%%%%%%%%%%%%%%%%%%%%%%%%%%%%%%%%%%%%%%%%%%%%%%%%%%%%%%%
%%% Case N1
%%%%%%%%%%%%%%%%%%%%%%%%%%%%%%%%%%%%%%%%%%%%%%%%%%%%%%%%%%%%%%%%%%%%%%
\subsection{Asymptotic behavior for the null recurrent case with non-vanishing drift (\textbf{N1})}
\label{sec:conv:N1}

We consider $b_+=0$, $b_->0$. The particle is then pushed upward when its position
is negative. Yet the process is only null recurrent.
The measure $M$ satisfies
\begin{equation}
\label{eq:M:1}
M(\RR_-)=\frac{1}{b_-}\text{ and }M(\RR_+)=+\infty.
\end{equation}

Using 9) in \cite[Section 6.8, p.~228]{ito} or \cite{watanabe58,maruyama},
with \eqref{eq:M:1}, 
\begin{equation*}
    \frac{Q^-_T}{T}\convas[T\to\infty]\frac{M(\RR_-)}{M(\RR)}=0.
\end{equation*}
Since $Q_T^++Q_T^-=T$, it holds that $Q^+_T/T$ converges almost surely to $1$.

Using Proposition~\ref{prop:clt} on $M^+$ only, we obtain 
that 
\begin{equation}
    \label{eq:4bis}
    \frac{M_T^+}{\sqrt{T}} \convl[T\to\infty]\sigma^+\cN^+
\end{equation}
for a Gaussian random variable $\cN^+\sim\cN(0,1)$ and then that  
$\sqrt{T}(\beta^+_T-b_+)\convl[T\to\infty] \sigma^+\cN^+$.

The asymptotic behavior of $Q^-_T$ is more delicate to deal with 
as the process $\xi$ is only null recurrent. For this, we
use the results of \cite{hopfner} which extends the one 
of D.A. Darling and M.~Kac \cite{dk} on additive and
martingale additive functionals.

The Green kernel with respect to the invariant measure $M$ of $\cL$ is given by~\cite{feller,ito,rogers2}
\begin{equation*}
    g_\lambda(x,y)\eqdef\frac{1}{W_\lambda}\begin{cases}
	\psi_\lambda(x)\phi_\lambda(y)&\text{ if }x<y,\\
	\phi_\lambda(x)\psi_\lambda(y)&\text{ if }x\geq y
    \end{cases}
    \text{ with }
    W_\lambda=\frac{\psi_\lambda'(0)\phi_\lambda(0)-\psi_\lambda(0)\phi_\lambda'(0)}{S'(0)}.
\end{equation*}
Using \eqref{increasingsolution}-\eqref{decreasingsolution}, 
\begin{equation}
    \label{eq:wronsk:1}
W_\lambda=
 \frac{\sqrt{2\lambda}}{\sigma_+} 
+ \frac{\sqrt{b_-^2 + 2\lambda\sigma_-^2}}{\sigma_-^2}
-\frac{b_-}{\sigma_-^2}\xrightarrow[\lambda\to 0]{} \frac{\sqrt{2}}{\sigma_+}.
\end{equation}
% In particular, $W_\lambda/\sqrt{\lambda}$ converges to $\sqrt{2}/\sigma_+$ as $\lambda$ converges
% to~$0$.

On the other hand, it follows
from \eqref{increasingsolution}-\eqref{decreasingsolution} that 
\begin{equation}
    \label{eq:phipsi:1}
    \psi_\lambda(x)\xrightarrow[\lambda\to 0]{}\psi_0(x)\eqdef 1
    \text{ and }
    \phi_\lambda(x)\xrightarrow[\lambda\to 0]{}\phi_0(x)\eqdef 1
    \text{ for any }x\in\RR.
\end{equation}

For a measurable function $f:\RR\to\RR_+$ such 
that $\int_\RR f\vd M<+\infty$, combining \eqref{eq:wronsk:1} and \eqref{eq:phipsi:1} yields
that 
\begin{equation*}
    \sqrt{\lambda}   \int_{\RR} g_\lambda(x,y)f(x)m(y)\vd y\xrightarrow[\lambda\to 0]{}
    \frac{\sigma_+}{\sqrt{2}}\int_\RR f(y) m(y)\vd y,\ \forall x\in\RR.
\end{equation*}
The above convergence allows one to identify the parameters to use in the limit theorem
we will use on $(M^-,Q^-)$.
We then define $\ell(\lambda)\eqdef \sqrt{2}/\sigma_+$ which is a constant function, 
and $\alpha\eqdef 1/2$, the exponent of $\lambda$.

Let $(\cM_t)_{t\geq 0}$ be a Mittag-Leffler process of index $\alpha=1/2$ (it is the inverse
of an increasing stable process of index $1/2$). The process $2^{-1/2}\cM$
is equal in distribution to the running maximum of a Brownian motion, 
or equivalently, to the local time of a Brownian motion \cite[Remark~2.9, p.~21]{hopfner}. 

From Theorem~3.1 and Corollary~3.2 in \cite[p.~26]{hopfner}, 
since $\bra{M^-}_t=\sigma^2_- Q^-_t$, $t\geq 0$ and $M$ is continuous, 
\begin{equation}
    \label{eq:6}
    \left(\sqrt{\ell(n)}\frac{M^{-}_{nt}}{n^{1/4}},\ell(n)\frac{Q^{-}_{nt}}{n^{1/2}}
    \right)_{t\in [0,1]}
    \convl[n\to\infty] 
    \left(\sigma_-\sqrt{\nu} B^-(\cM_t),\nu \cM_t
    \right)_{t\in [0,1]}
\end{equation}
with respect to the uniform topology, where $B$ is a Brownian
motion independent from $\cM$ and 
\begin{equation*}
    \nu=\int_{\RR} m(y)\EE_y\Prb*{\int_0^1 \ind{\xi_s\leq 0}\vd s}\vd y
    =M(\RR_-)=\frac{1}{b_-}.
\end{equation*}

From the reflection principle, the distribution of $\cM_1$
is the same as the one of a truncated normal distribution $\sqrt{2}\cT$ where
$\cT\eqdef \abs{\cG}$ with $\cG\sim\cN(0,1)$.
Setting $t=1$ in \eqref{eq:6} and using the scaling property of the Brownian 
motion $B^-$, 
\begin{equation*}
    \left(\frac{M^-_T}{T^{1/4}},\frac{Q^{-}_T}{T^{1/2}}\right)
    \convl[T\to\infty]
	\left(
	\sigma_-\frac{\sqrt{\sigma_+}}{\sqrt{b_-}}\sqrt{\cT}\cdot \cN^-,
	\frac{\sigma_+}{b_-}\cT
    \right)
\end{equation*}
for a Gaussian random variable $\cN^-\sim\cN(0,1)$ independent from $\cT$.

It remains to show the independence of $\cN^+$, $\cN^-$ and $\cT$. 

Since $\bra{M^\pm}_t=\sigma_\pm^2Q^\pm_t$
and $\bra{M^+,M^-}_t=0$ for any $t\geq 0$, the Knight theorem
\cite[Theorem~7.3', p.~92]{ikeda}
implies that there exists on an extension of $(\Omega,\cF,\PP)$ 
a $2$-dimensional Brownian motion $(B^+,B^-)$
such that $M^\pm_t=B^\pm(\sigma_\pm^2Q^\pm_t)$ for any $t\geq 0$.
Let us set $B^+_n(t)=n^{-1/2}B^+(nt)$ and $B^-_n(t)=n^{-1/4}B^-(\sqrt{n}t)$
for any integer $n$ and any $t\geq 0$. From the scaling property, 
$(B^+_n,B^-_n)$ is still a $2$-dimensional Brownian motion 
which converges in distribution to a $2$-dimensional Brownian 
motion $(B^+_\infty,B^-_\infty)$ in the space $\cC([0,1],\RR^2)$ of continuous functions.

For any $0\leq s\leq t \leq 1$,
$Q^+_{nt}-Q^+_{ns}\leq n(t-s)$ so that $\Set{(Q^+_{nt}/n)_{t\in[0,1]}}_{n\geq 1}$ is 
tight in the space of continuous functions.
Hence, $\Set*{(Q^+_{nt}/{n})_{t\in[0,1]}}_{n\geq 1}$ converges in probability 
to the identity map $t\mapsto t$ in the space of continuous function $\cC([0,1],\RR)$.

Combining this result with~\eqref{eq:6}, it holds that 
$\Set*{\left(B^+_n(t),B^-_n(t),n^{-1}Q^+_{nt},n^{-1/2}Q^-_{nt}\right)_{t\in[0,1]}}_{n\geq 1}$
is tight in $\cC([0,1],\RR^4)$ and then necessarily
\begin{equation}
    \label{eq:conv:1}
    \left(B^+_n(t),B^-_n(t),\frac{Q^+_{nt}}{n},\frac{Q^-_{nt}}{\sqrt{n}}\right)_{t\in[0,1]}
    \convl[n\to\infty]{}
    (B^+_\infty(t),B^-_\infty(t),t,\nu \cM_t)_{t\in[0,1]}
\end{equation}
in the space of continuous functions $\cC([0,1],\RR^4)$.
Being the inverse of a $1/2$-stable process, hence a pure jump process, 
$\cM$ is independent from $(B^+_\infty,B^-_\infty)$
for the arguments presented in \cite[p.~38]{hopfner} or \cite[Theorem~6.3, p.~77]{ikeda}.

For any $t\in[0,1]$, owing to the definition of $B^+_n$ and $B^-_n$, 
\begin{align*}
    \frac{M^+_{nt}}{\sqrt{n}}
    &=\frac{1}{n^{1/2}}B^+(\sigma_+^2 Q^+_{nt})
    = B^+_n\left(\sigma_+^2\frac{Q^+_{nt}}{n}\right)
    \\
    \text{ and }
    \frac{M^-_{nt}}{n^{1/4}}
    &=\frac{1}{\sqrt{n}}B^-(\sigma_-^2Q^-_{nt})
    = B^-_n\left(\sigma_-^2\frac{Q^-_{nt}}{\sqrt{n}}\right).
\end{align*}
Using $(n^{-1}Q^+_{nt})_{t\in[0,1]}$
and $(n^{-1/2}Q^-_{nt})_{t\in[0,1]}$ as random time changes, we deduce from~\eqref{eq:conv:1} and the results 
in \cite[p.~144]{billingsley} that 
\begin{equation*}
    \left(\frac{M^+_{nt}}{n^{1/2}},\frac{M^-_{nt}}{n^{1/4}},
    \frac{Q^+_{nt}}{n},\frac{Q^-_{nt}}{\sqrt{n}}
    \right)_{t\in [0,1]}
    \convl[ n\to\infty ]{}
    (B_\infty^+(\sigma_+^2 t),B_\infty^-(\sigma_-^2\nu\cM_t),t,\nu\cM_t)_{t\in[0,1]}.
\end{equation*}
By setting $\cN^+\eqdef B_\infty^+(\sigma_+^2)/\sigma_+\sim\cN(0,1)$ in \eqref{eq:4bis}, 
$\cT\eqdef \cM_1/\sqrt{2}$ and $\cN^-\eqdef B_\infty^-(\sigma_-^2\nu)/\sigma_-\sqrt{\nu}\sim\cN(0,1)$, 
this proves \eqref{eq:N1} using $t=1$ in the above limit.

%%%%%%%%%%%%%%%%%%%%%%%%%%%%%%%%%%%%%%%%%%%%%%%%%%%%%%%%%%%%%%%%%%%%%%
\subsection{Asymptotic behavior for the transient case (\textbf{T0})}
\label{sec:conv:T0}

We recall that we consider only $b_+>0$ and $b_-\geq 0$.

It is known that the last passage time $\ell_0$ to $0$ is almost surely 
finite so that $Q^-_\infty<\infty$ almost surely. Since $Q^+_T=T-Q^-_T$, 
we obtain that $Q^+_T/T$ converges almost surely to $1$.
The convergence results regarding $\beta^+_T$ follows from 
Proposition~\ref{prop:clt} applied only to one component.

The asymptotic behavior of $\beta_T^-$ follows from Proposition~\ref{prop:convergence}(ii).

When $T>\ell_0$ and $\xi_0=0$, then $\xi_T>0$ and thus 
$\beta_T^-=L_T(\xi)/2Q^-_T$. Yet the local time $L_T(\xi)$
and the occupation times $Q^-_T$ are constant when $T>\ell_0$. The result follows 
by the computations of Section~\ref{sub:transient}. 

%%%%%%%%%%%%%%%%%%%%%%%%%%%%%%%%%%%%%%%%%%%%%%%%%%%%%%%%%%%%%%%%%%%%%%
\subsection{Asymptotic behavior for the transient case generated by diverging drift (\textbf{T1})}
\label{sec:conv:T1}

When $b_-<0$ and $b_+>0$, the process is also transient as $S(+\infty)<+\infty$ and $S(-\infty)>-\infty$. 
The scale function $S$ map $\RR$ to $(-\gamma_-,\gamma_+)$ with 
$\gamma_\pm=\abs{\sigma_\pm^2}{2b_\pm}$. From the Feller test \cite[Theorem~5.29]{ks}, the process does not explode. 
Thus, as $\xi_0=0$, it follows from \cite{watanabe} or \cite[Proposition 5.5.22, p.~354]{ks}
that 
\begin{equation*}
    p\eqdef \PP\Prb*{\lim_{T\to\infty} \xi_T=+\infty}=1-\PP\Prb*{\lim_{T\to\infty} \xi_T=-\infty}
    =\frac{\gamma_-}{\gamma_-+\gamma_+}
    =\frac{\sigma_-^2b_+}{\sigma_+^2b_-+\sigma_-^2b_+}.
\end{equation*}

Then event that $\Set{\lim_{T\to\infty} \abs{\xi_T}=+\infty}$ arise
when the process starts an excursion with infinite lifetime, thus 
after the last passage time to $0$.
We denote by $A^\pm$ the event $\Set{\lim_{T\to\infty}Q^\pm_T/T=1}$,
so that $A^+\cap A^-=\emptyset$. 
Hence, $\PP\Prb{A^+}=1-\PP\Prb{A^-}=p$.

Using the same arguments as in the case \textbf{T0}, given $A^\pm$, 
$\beta_T^\pm$ converges almost surely to $b_\pm$ while $\beta_T^\mp=\pm L_\infty(\xi)/Q^\mp_{\ell_0}$.

For the Central Limit Theorem, we apply Corollary~2.3 in \cite{crimaldi}
on $M^+_T/\sqrt{T}$. As $Q_T^+>0$ a.s. as soon as $T>0$ since $\xi_0=0$ and 
\begin{equation*}
    \Set{\cB=p}=A^+,\ \text{a.s. for }p=0,1,
\end{equation*}
it follows that for a normal distribution $\cN$, 
\begin{equation*}
    \sqrt{\frac{T}{\sigma_+^2 Q_T^+}}\cdot\frac{M_T}{\sqrt{T}}\xrightarrow[N\to\infty]{\cF_\infty\mathrm{-stably}} \cN
    \text{ under }\PP\Prb{\cdot\given \cB=1}.
\end{equation*}
It follows that
\begin{equation*}
    \beta_T^+-b_+=
    \sqrt{T}\frac{M_T}{Q^+_T}=\sigma_+\sqrt{\frac{T}{Q^+_T}}\sqrt{\frac{T}{\sigma_+^2 Q^+_T}}%%
    \frac{M_T}{\sqrt{T}}
\xrightarrow[N\to\infty]{\cF_\infty\mathrm{-stably}} \sigma_+ \cN
    \text{ under }\PP\Prb{\cdot\given \cB=1}.
\end{equation*}
Hence the result.

%%%%%%%%%%%%%%%%%%%%%%%%%%%%%%%%%%%%%%%%%%%%%%%%%%%%%%%%%%%%%%%%%%%%%%
%%%%%%%%%%%%%%%%%%%%%%%%%%%%%%%%%%%%%%%%%%%%%%%%%%%%%%%%%%%%%%%%%%%%%%

\section{Wilk's theorem and LAN property}

\label{sec:loglik}

Owing to the quadratic nature of the log-likelihood, 
we easily deduce both a Wilk theorem, on which a 
hypothesis test may be developed, as well as the
Local Asymptotic Normality (LAN) property, which 
proves that our estimators are asymptotically efficient.

%%%%%%%%%%%%%%%%%%%%%%%%%%%%%%%%%%%%%%%%%%%%%%%%%%%%%%%%%%%%%%%%%%%%%%
\subsection{Wilk's theorem and a hypothesis test}
\label{sec:wilk}

The log-likelihood $\log G_T(b_+,b_-)$ can be computed from the data
using \eqref{eq:girsanov}. Moreover, this function is quadratic in $b_+$ and $b_-$.  
We have then 
\begin{align*}
    \nabla \log G_T(b_+ ,b_-)
    &=
    \begin{bmatrix}
    \frac{\ppart{\xi_T}}{\sigma_+^2}-\frac{\ppart{\xi_0}}{\sigma_+^2}
    -\frac{1}{2\sigma_+^2}L_T(\xi)-\frac{b_+}{\sigma_+^2}Q_T^+
    \\
    -\frac{\mpart{\xi_T}}{\sigma_-^2}+\frac{\mpart{\xi_0}}{\sigma_-^2}
    +\frac{1}{2\sigma_-^2}L_T(\xi)-\frac{b_-}{\sigma_-^2}Q_T^-
\end{bmatrix}
\\
\text{ and }
     \Hess\log G_T(b_+ ,b_-)
     &=
    \begin{bmatrix}
	-\frac{Q_T^+}{\sigma_+^2} & 0 \\
	0 & -\frac{Q_T^-}{\sigma_-^2}
    \end{bmatrix}.
\end{align*}
Therefore, around any point $(b_+,b_-)$, 
\begin{multline}
    \label{eq:hyp:3}
\log G_T(b_++\Delta b_+,b_-+\Delta b_-)
    =
    \log G_T(b_+,b_-)
\\
+
\nabla \log G_T(b_+,b_-)\cdot \begin{bmatrix} 
    \Delta b_+\\\Delta b_-
\end{bmatrix}
-\frac{Q_T^+}{2\sigma_+^2} \Delta b_+^2
-\frac{Q_T^-}{2\sigma_-^2} \Delta b_-^2.
\end{multline}

In particular, we prove a result in the asymptotic behavior of the log-likelihood
for the ergodic case \textbf{E} or the null recurrent case \textbf{N1}. 
A similar result can be given for the null recurrent cases \textbf{N0}
with a different limit distribution that can be identified with the density 
given in Section~\ref{sec:N0}.

\begin{proposition}[Wilk's theorem; Ergodic case \textbf{E} or null recurrent case \textbf{N1}]
    \label{prop:hyp:E}
Denote by $(\btrue_+,\btrue_-)$ be the real parameters. 
Then 
\begin{equation}
    \label{eq:hyp:1}
    -2\log \frac{G_T(\btrue_+,\btrue_-)}{G_T(\beta^+_T,\beta^-_T)}\convl[T\to\infty]{}
    \chi_2
    \eqdef 
    (\cN^+)^2
    +
    (\cN^-)^2
    \text{ under }\PP_{(\btrue_+,\btrue_-)}
\end{equation}
for two independent, unit Gaussian random variables
$\cN^+$ and $\cN^-$.
Besides, when $(b_+,b_-)\not=(\btrue_+,\btrue_-)$, then 
\begin{equation}
    \label{eq:hyp:2}
    -\log \frac{G_T(b_+,b_-)}{G_T(\beta^+_T,\beta^-_T)}\convas[T\to\infty]{} +\infty
\text{ under }\PP_{(\btrue_+,\btrue_-)}.
\end{equation}
\end{proposition}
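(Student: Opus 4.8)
The plan is to exploit that $\log G_T$ is \emph{exactly} quadratic in $(b_+,b_-)$, with constant Hessian $\mathrm{diag}(-Q^+_T/\sigma_+^2,-Q^-_T/\sigma_-^2)$, and that $(\beta^+_T,\beta^-_T)$ is its maximizer, so that $\nabla\log G_T(\beta^+_T,\beta^-_T)=0$. Applying the expansion \eqref{eq:hyp:3} around the base point $(\beta^+_T,\beta^-_T)$ with increments $\Delta b_\pm=b_\pm-\beta^\pm_T$, the gradient term vanishes and I obtain the exact identity
\begin{equation*}
    -2\log\frac{G_T(b_+,b_-)}{G_T(\beta^+_T,\beta^-_T)}
    =\frac{Q^+_T}{\sigma_+^2}(\beta^+_T-b_+)^2
    +\frac{Q^-_T}{\sigma_-^2}(\beta^-_T-b_-)^2,
\end{equation*}
valid for every $(b_+,b_-)$ and every $T$. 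Both displayed claims \eqref{eq:hyp:1} and \eqref{eq:hyp:2} then reduce to the analysis of this weighted sum of squared estimation errors.

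For \eqref{eq:hyp:1}, I would substitute $(b_+,b_-)=(\btrue_+,\btrue_-)$ and use $\beta^\pm_T-\btrue_\pm=M^\pm_T/Q^\pm_T$ from Proposition~\ref{prop:qmle} to rewrite the right-hand side as $(M^+_T)^2/(\sigma_+^2 Q^+_T)+(M^-_T)^2/(\sigma_-^2 Q^-_T)$. In the ergodic case \textbf{E}, Proposition~\ref{prop:clt} supplies the joint stable convergence of $(M^+_T/\sqrt T,M^-_T/\sqrt T)$ towards $(\sigma_+\sqrt{c_+}\cN^+,\sigma_-\sqrt{c_-}\cN^-)$, with $c_\pm$ the almost sure limits of $Q^\pm_T/T$; writing each summand as $(M^\pm_T/\sqrt T)^2/(\sigma_\pm^2\,Q^\pm_T/T)$, the factors $\sigma_\pm^2 c_\pm$ cancel and each term converges to $(\cN^\pm)^2$, the two being independent, so the sum converges to $\chi_2=(\cN^+)^2+(\cN^-)^2$. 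In the null recurrent case \textbf{N1} the positive part is handled identically, using $Q^+_T/T\to1$ and $M^+_T/\sqrt T\convl\sigma_+\cN^+$. For the negative part I would invoke the joint convergence established in Section~\ref{sec:conv:N1}: writing the summand as $(M^-_T/T^{1/4})^2/(\sigma_-^2\,Q^-_T/T^{1/2})$, both numerator and denominator carry the same Mittag--Leffler factor $\cT$, which cancels, again leaving $(\cN^-)^2$. The mutual independence of $\cN^+$, $\cN^-$ (and $\cT$) proved there yields the limit $\chi_2$.

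For \eqref{eq:hyp:2}, I would read the identity above with $(b_+,b_-)\neq(\btrue_+,\btrue_-)$ as
\begin{equation*}
    -\log\frac{G_T(b_+,b_-)}{G_T(\beta^+_T,\beta^-_T)}
    =\frac{Q^+_T}{2\sigma_+^2}(\beta^+_T-b_+)^2
    +\frac{Q^-_T}{2\sigma_-^2}(\beta^-_T-b_-)^2.
\end{equation*}
By consistency (Proposition~\ref{prop:E} in case \textbf{E}, and the corresponding statement in case \textbf{N1}) one has $\beta^\pm_T\convas\btrue_\pm$, hence $(\beta^\pm_T-b_\pm)^2\to(\btrue_\pm-b_\pm)^2$ almost surely. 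Since $(b_+,b_-)\neq(\btrue_+,\btrue_-)$, at least one of these limits is strictly positive; moreover the same consistency forces $Q^\pm_\infty=+\infty$ almost surely (Proposition~\ref{prop:convergence}(i)). As both terms are nonnegative and the one with a strictly positive limiting error is multiplied by a diverging occupation time, the sum tends to $+\infty$ almost surely.

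The hard part will be the negative-part analysis in case \textbf{N1}: unlike case \textbf{E}, the pair $(M^-_T,Q^-_T)$ has the nonstandard rates $T^{1/4}$ and $T^{1/2}$ and a genuinely mixed-normal limit governed by the Mittag--Leffler time $\cT$. The crucial observation is that $\cT$ enters numerator and denominator of $(M^-_T)^2/(\sigma_-^2 Q^-_T)$ with the same power and therefore cancels, so that the ratio still converges to a pure $\chi^2(1)$ variable; carrying this through rigorously, together with the joint convergence and independence of $\cN^+$, $\cN^-$, $\cT$, is precisely where the machinery of Section~\ref{sec:conv:N1} is indispensable.
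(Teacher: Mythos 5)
Your proposal is correct and follows essentially the same route as the paper: both exploit that $\log G_T$ is exactly quadratic with Hessian $\mathrm{diag}(-Q_T^+/\sigma_+^2,-Q_T^-/\sigma_-^2)$, expand around the maximizer to get the exact identity $-2\log\bigl(G_T(b_+,b_-)/G_T(\beta_T^+,\beta_T^-)\bigr)=\frac{Q_T^+}{\sigma_+^2}(\beta_T^+-b_+)^2+\frac{Q_T^-}{\sigma_-^2}(\beta_T^--b_-)^2$, and then feed in the joint limit theorems of Propositions~\ref{prop:E} and the case \textbf{N1} result (with rates $T^{1/2}$ and $T^{1/4}$, the Mittag--Leffler factor cancelling between $(M_T^-)^2$ and $Q_T^-$), plus consistency and divergence of the occupation times for \eqref{eq:hyp:2}.
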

\begin{proof}
    Considering \eqref{eq:hyp:3} at $(b_+,b_-)=(\beta^+_T,\beta^-_T)$
since $\nabla \log G_T(\beta^+_T,\beta^-_T)=(0,0)$, for any parameter $(b_+,b_-)$
and any $\alpha_+,\alpha_->0$, 
\begin{equation*}
    -2\log G_T(b_+,b_-)/G_T(\beta^+_T,\beta^-_T)=
    \frac{Q_T^+}{T^{\alpha_+}\sigma_+^2} \big(T^{\frac{\alpha_+}{2}}(b_+-\beta^+_T)\big)^2
    +\frac{Q_T^-}{T^{\alpha_-}\sigma_-^2} \big(T^{\frac{\alpha_-}{2}}(b_--\beta^-_T)\big)^2.
\end{equation*}

When the process is ergodic (Case \textbf{E}), 
we set $(\alpha_+,\alpha_-)=(1,1)$. 
It follows from Proposition~\ref{prop:E} that \eqref{eq:hyp:1}
holds. When $(b_+,b_-)\not=(\btrue_+,\btrue_-)$, 
then $\beta^\pm_T-b_\pm$ does not converge to $0$ while 
$Q^+_T$ converges a.s. to infinity. This proves \eqref{eq:hyp:2}.
The result is similar in the case \textbf{N1} with $(\alpha_+,\alpha_-)=(1,1/2)$.
\end{proof}

A \emph{hypothesis test} can be developed from Proposition~\ref{prop:hyp:E}.
The null hypothesis is $(b_+,b_-)=(b_+^0,b_-^0)$ for a given 
drift $(b_+^0,b_-^0)$ while the alternative hypothesis
is $(b_+,b_-)\not=(b_+^0,b_-^0)$.

Using \eqref{eq:hyp:3}, we compute $w=-2\log G_T(b_+^0,b_-^0)/G_T(\beta^+_T,\beta^-_T)$.
The null hypothesis is rejected with a confidence level $\alpha$
if $w>q_\alpha$
where $q_\alpha$ is the $\alpha$-quantile $\PP[\chi_2\leq q_\alpha]=\alpha$
while~$\chi_2$ follows a $\chi^2$ distribution with $2$ degrees of freedom.

By using the quasi-likelihood instead of the likelihood, we obtain in a similar
way for the ergodic case (Case \textbf{E}) that 
\begin{equation*}
    -2(\Lambda_T(\btrue_+,\btrue_-)-\Lambda_T(\beta^+_T,\beta^-_T))
\convl[T\to\infty]{}
    \sigma_+^2(\cN^+)^2+\sigma_-^2(\cN^-)^2
\text{ under }\PP_{(\btrue_+,\btrue_-)},
\end{equation*}
where $\cN^+$ and $\cN^-$ are two unit Gaussian random variables. This could
also be used for an hypothesis test.
When using either the log-likelihood of the quasi-likelihood, $\sigma_+$ and $\sigma_-$
shall be known.

%%%%%%%%%%%%%%%%%%%%%%%%%%%%%%%%%%%%%%%%%%%%%%%%%%%%%%%%%%%%%%%%%%%%%%
\subsection{The LAN property}

\label{sec:LAN}

The LAN (local asymptotic normal) property, introduced by L.~Le Cam in \cite{lecam53}, 
characterizes the efficiency of the estimator (See also \cite{ibragimov,lecam00}
among many other references). It was extended 
as the LAMN (local asymptotic mixed normal) to deal with 
a mixed normal limits by P.~Jeganathan in \cite{jeganathan82a}. 

The quadratic nature of the log-likelihood as well 
as our limit theorems implies that the LAN (resp. LAMN) 
property is verified in the ergodic case \textbf{E}
(resp. null recurrent case \textbf{N1}).

\begin{proposition}[LAN property; Ergodic case \textbf{E}]
    In the ergodic case \textbf{E}, 
    the LAN property holds for the likelihood at $(\btrue_+,\btrue_-)$
    with rate of convergence $(\sigma_+^2/\sqrt{T},\sigma_-^2/\sqrt{T})$
    and asymptotic Fisher information 
    \begin{equation*}
	\Gamma\eqdef \frac{1}{\abs{b_-}+\abs{b_+}}
    \begin{bmatrix}
	\sigma_+^2\abs{b_-} & 0 \\
	0 & \sigma_-^2\abs{b_+} 
    \end{bmatrix}.
    \end{equation*}
\end{proposition}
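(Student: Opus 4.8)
The plan is to exploit the fact, already recorded in~\eqref{eq:hyp:3}, that $\log G_T$ is an \emph{exact} quadratic function of the drift parameters; consequently the LAN expansion will require no Taylor remainder and will follow by merely substituting a local reparametrization and invoking the two limit theorems of the ergodic case. Concretely, I would fix a direction $u=(u_+,u_-)\in\RR^2$, introduce the rate matrix $\varphi_T\eqdef\operatorname{diag}(\sigma_+^2/\sqrt{T},\sigma_-^2/\sqrt{T})$, and perturb the true parameter as $b_\pm=\btrue_\pm+(\sigma_\pm^2/\sqrt{T})u_\pm$, i.e.\ $\Delta b_\pm=(\sigma_\pm^2/\sqrt{T})u_\pm$. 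Evaluating~\eqref{eq:hyp:3} at $(\btrue_+,\btrue_-)$ then writes the log-likelihood ratio as the sum of a linear term $\nabla\log G_T(\btrue_+,\btrue_-)\cdot\Delta b$ and the quadratic term $-\tfrac{Q_T^+}{2\sigma_+^2}\Delta b_+^2-\tfrac{Q_T^-}{2\sigma_-^2}\Delta b_-^2$.

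The first step is to simplify the gradient. Reading off the explicit expression of $\nabla\log G_T$ computed in Section~\ref{sec:wilk} and using $R_T^\pm=\pm(\pmpart{\xi_T}-\pmpart{\xi_0}-L_T(\xi)/2)=M_T^\pm+b_\pm Q_T^\pm$ (cf.\ \eqref{eq:RM} and Proposition~\ref{prop:qmle}), both components collapse and one gets
\begin{equation*}
\nabla\log G_T(\btrue_+,\btrue_-)=\Paren*{\frac{M_T^+}{\sigma_+^2},\ \frac{M_T^-}{\sigma_-^2}}.
\end{equation*}
Substituting $\Delta b_\pm=(\sigma_\pm^2/\sqrt{T})u_\pm$, the linear term becomes $u^\top\Delta_T$ with $\Delta_T\eqdef(M_T^+/\sqrt{T},M_T^-/\sqrt{T})$, while the quadratic term becomes $-\tfrac12 u^\top\Gamma_T u$ with $\Gamma_T\eqdef\operatorname{diag}(\sigma_+^2 Q_T^+/T,\ \sigma_-^2 Q_T^-/T)$.

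It then remains to pass to the limit in these two already-identified quantities. Proposition~\ref{prop:E} gives $Q_T^\pm/T\convas[T\to\infty]\abs{b_\mp}/(\abs{b_-}+\abs{b_+})$, whence $\Gamma_T\to\Gamma$ almost surely, with $\Gamma$ exactly the stated Fisher information matrix; in particular $-\tfrac12 u^\top\Gamma_T u=-\tfrac12 u^\top\Gamma u+o_{\PP}(1)$. For the linear part, Proposition~\ref{prop:clt} applied with $c_\pm=\abs{b_\mp}/(\abs{b_-}+\abs{b_+})$ yields, via~\eqref{eq:3}, the stable convergence $\Delta_T\convl[T\to\infty](\sigma_+\sqrt{c_+}\cN^+,\sigma_-\sqrt{c_-}\cN^-)$, a centered Gaussian vector with covariance $\operatorname{diag}(\sigma_+^2 c_+,\sigma_-^2 c_-)=\Gamma$. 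Combining the two limits, the log-likelihood ratio admits the expansion $u^\top\Delta_T-\tfrac12 u^\top\Gamma u+o_{\PP}(1)$ with $\Delta_T\convl[T\to\infty]\cN(0,\Gamma)$ under $\PP_{(\btrue_+,\btrue_-)}$, which is precisely the LAN property at rate $\varphi_T$ with asymptotic Fisher information $\Gamma$.

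The reassuring point is that there is essentially no obstacle in the algebra: because $\log G_T$ is genuinely quadratic, the only ``$o_{\PP}(1)$'' contribution is the almost surely vanishing discrepancy $\Gamma_T-\Gamma$, and all the analytic content is outsourced to Propositions~\ref{prop:E} and~\ref{prop:clt}. The one point requiring care is the Slutsky step combining the \emph{stable} convergence of $\Delta_T$ with the almost sure convergence of $\Gamma_T$ to a \emph{deterministic} limit, so as to read off the limit law of the whole right-hand side; this is immediate here precisely because $\Gamma$ is constant, which is also why one obtains genuine LAN rather than the mixed-normal LAMN encountered in case~\textbf{N1}, where $Q_T^-/\sqrt{T}$ converges to the nondegenerate random variable $\cT$.
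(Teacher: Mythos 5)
Your proposal is correct and follows essentially the same route as the paper: both exploit the exact quadratic form \eqref{eq:hyp:3}, identify the score at $(\btrue_+,\btrue_-)$ as $(M_T^+/\sigma_+^2,\,M_T^-/\sigma_-^2)$, rescale by $\sigma_\pm^2/\sqrt{T}$, and conclude from Proposition~\ref{prop:E} (for $Q_T^\pm/T$, hence $\bra{M^+,M^-}_T/T\to\Gamma$) together with the martingale CLT of Proposition~\ref{prop:clt}. The only difference is presentational: you make the Slutsky/stable-convergence step and the role of the deterministic limit of $\Gamma_T$ explicit, which the paper leaves implicit.
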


\begin{proof}
At $(\btrue_+,\btrue_-)$, the gradient may be written
\begin{equation*}
    D(\btrue_+,\btrue_-)=
    \begin{bmatrix}
	\frac{Q^+_T}{\sqrt{T}\sigma_+^2}\sqrt{T}(\beta_T^+-\btrue_+)
	\\
	\frac{Q^-_T}{\sqrt{T}\sigma_-^2}\sqrt{T}(\beta_T^--\btrue_-)
    \end{bmatrix}.
\end{equation*}

Using \eqref{eq:hyp:3}, 
\begin{multline}
    R(T)\eqdef   \log\frac{G\left(\btrue_++\sigma_+^2\frac{\Delta b_+}{\sqrt{T}},
	\btrue_-+\sigma_-^2\frac{\Delta b_-}{\sqrt{T}}\right)}{
    G_T(\btrue_+,\btrue_-)}\\
    =
    \begin{bmatrix}
    Q^+_T(\beta_T^+-\btrue_+)
	\\
	Q^-_T(\beta_T^--\btrue_-)
    \end{bmatrix}
    \cdot 
    \begin{bmatrix}
	\Delta b_+\\
	\Delta b_-
    \end{bmatrix}
-\frac{\sigma_+^2Q_T^+}{2T} \Delta b_+^2
-\frac{\sigma_-^2Q_T^-}{2T} \Delta b_-^2
\\
=
\frac{1}{\sqrt{T}}
    \begin{bmatrix}
    M^+_T
	\\
	M^-_T
    \end{bmatrix}
    \cdot 
    \begin{bmatrix}
	\Delta b_+\\
	\Delta b_-
    \end{bmatrix}
    -\frac{1}{2T}
    \begin{bmatrix}
	\Delta b_+\\
	\Delta b_-
    \end{bmatrix}
    \cdot 
    \bra{M^+,M^-}_T
    \begin{bmatrix}
	\Delta b_+\\
	\Delta b_-
    \end{bmatrix}.
\end{multline}
With $c=\abs{b_+}+\abs{b_-}$, Proposition~\ref{prop:E} implies
that $(T^{-1/2}M^+,^{-1/2}M^-)$  converges 
in distribution to $\cG\sim\cN(0,\Gamma)$ 
and $T^{-1}\bra{M^+,M^-}_T$ converges to the diagonal, definite positive
matrix $\Gamma$.
This proves the LAN property.
\end{proof}
\begin{proposition}[LAMN property; Null recurrent case \textbf{N1}]
    In the null recurrent case \textbf{N1}, 
    the LAMN property holds for the likelihood at $(\btrue_+,\btrue_-)$
    with rate of convergence $(\sigma_+^2/T^{1/2},\sigma_-^2/T^{1/4})$
    and asymptotic (random) Fisher information 
    \begin{equation*}
	\Gamma\eqdef 
    \begin{bmatrix}
	\sigma_+^2 & 0 \\
	0 & \sigma_-^2\frac{\sigma_+}{b_-}\abs{\cN}
    \end{bmatrix}
    \end{equation*}
    for a normal random variable $\cN\sim\cN(0,1)$.
\end{proposition}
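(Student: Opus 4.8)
The plan is to reproduce the argument used for the LAN property in case \textbf{E}, exploiting again the exact quadratic expansion \eqref{eq:hyp:3}, but now with the anisotropic normalization forced by the null recurrent regime. First I would set the local reparametrization $b_+=\btrue_++\sigma_+^2\Delta b_+/T^{1/2}$ and $b_-=\btrue_-+\sigma_-^2\Delta b_-/T^{1/4}$ and insert it in \eqref{eq:hyp:3} taken at $(\btrue_+,\btrue_-)$. The gradient of $\log G_T$ at the true parameter equals $(M_T^+/\sigma_+^2,M_T^-/\sigma_-^2)$, since $\ppart{\xi_T}-\ppart{\xi_0}-L_T(\xi)/2=R_T^+=M_T^++\btrue_+Q_T^+$ and symmetrically on the negative side. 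Hence the log-likelihood ratio is, with no remainder,
\begin{equation*}
R(T)=\frac{M_T^+}{T^{1/2}}\,\Delta b_+ + \frac{M_T^-}{T^{1/4}}\,\Delta b_- - \frac{\sigma_+^2 Q_T^+}{2T}\,\Delta b_+^2 - \frac{\sigma_-^2 Q_T^-}{2T^{1/2}}\,\Delta b_-^2,
\end{equation*}
a linear score term plus a concave quadratic form.

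Next I would let $T\to\infty$ in each coefficient using the results of Section~\ref{sec:conv:N1}. Since $Q_T^+/T\convas 1$, the first quadratic coefficient converges a.s.\ to $\sigma_+^2$; from $Q_T^-/T^{1/2}\convl\frac{\sigma_+}{b_-}\cT$ with $\cT\eqlaw\abs{\cN}$, the second converges in law to $\sigma_-^2\frac{\sigma_+}{b_-}\abs{\cN}$. These are exactly the diagonal entries of $\Gamma$. For the score, \eqref{eq:N1} together with the joint convergence \eqref{eq:conv:1} gives $M_T^+/T^{1/2}\convl\sigma_+\cN^+$ and $M_T^-/T^{1/4}\convl\sigma_-\frac{\sqrt{\sigma_+}}{\sqrt{b_-}}\sqrt{\cT}\,\cN^-$, jointly with $\cT$, where $\cN^+$ and $\cN^-$ are standard Gaussians independent of each other and of $\cT$. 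Conditionally on $\cT=\abs{\cN}$, this pair is centered Gaussian with covariance $\Gamma$.

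It then remains to read off the LAMN expansion. Writing $Z=(\cN^+,\cN^-)$ and $\Gamma^{1/2}=\mathrm{diag}\Paren*{\sigma_+,\ \sigma_-\tfrac{\sqrt{\sigma_+}}{\sqrt{b_-}}\sqrt{\cT}}$, the score limit is $\Gamma^{1/2}Z$, so that $R(T)\convl\Delta b\cdot\Gamma^{1/2}Z-\tfrac12\,\Delta b\cdot\Gamma\,\Delta b$, where $Z\sim\cN(0,I_2)$ is independent of $\Gamma$ (the matrix $\Gamma$ depending only on $\cT$, itself independent of $Z$). This is precisely the LAMN property at $(\btrue_+,\btrue_-)$ with rate of convergence $(\sigma_+^2/T^{1/2},\sigma_-^2/T^{1/4})$ and random Fisher information $\Gamma$.

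The only genuine difficulty is the joint convergence guaranteeing that a single mixing variable $\cT=\abs{\cN}$ simultaneously governs the conditional variance of the negative score $M_T^-/T^{1/4}$ and the quadratic coefficient $\Gamma_{22}$, together with the independence of $(\cN^+,\cN^-)$ from $\cT$. This is exactly the output of the Knight-theorem time-change argument and of the independence of the Mittag--Leffler limit from the Brownian motions $(B_\infty^+,B_\infty^-)$ established in Section~\ref{sec:conv:N1}; I would invoke that joint convergence directly rather than reprove it, and the exact quadratic identity \eqref{eq:hyp:3} then turns it into the stated LAMN expansion without any further estimate.
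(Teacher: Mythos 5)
Your proposal is correct and follows exactly the route the paper intends: the paper proves only the LAN case \textbf{E} explicitly and leaves the LAMN case \textbf{N1} to the same quadratic expansion \eqref{eq:hyp:3} combined with the limit theorems of Section~\ref{sec:conv:N1}, which is precisely what you do with the anisotropic rates $(\sigma_+^2/T^{1/2},\sigma_-^2/T^{1/4})$. Your identification of the single mixing variable $\cT=\abs{\cN}$ governing both the conditional variance of $M_T^-/T^{1/4}$ and the entry $\Gamma_{22}$, via the joint convergence obtained from the Knight time-change argument and the independence of the Mittag--Leffler limit from $(B_\infty^+,B_\infty^-)$, is exactly the ingredient the paper relies on.
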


%%%%%%%%%%%%%%%%%%%%%%%%%%%%%%%%%%%%%%%%%%%%%%%%%%%%%%%%%%%%%%%%%%%%%%
%%%%%%%%%%%%%%%%%%%%%%%%%%%%%%%%%%%%%%%%%%%%%%%%%%%%%%%%%%%%%%%%%%%%%%
%%%%%%%%%%%%%%%%%%%%%%%%%%%%%%%%%%%%%%%%%%%%%%%%%%%%%%%%%%%%%%%%%%%%%%

\section{Simulation study}
\label{simulations}

\subsection{From continuous to discrete data}

\label{sec:discrete}

In this section, we apply estimator \eqref{estb} and \eqref{def:b:qml} to simulated processes. We test whether the results are good or not, depending on sign and magnitude of involved quantities. 

Within the framework of Data~\ref{dat:2}, 
$\Set{\xi_t}_{t\in[0,T]}$ is observed on a discrete time grid $\{kT/N; k=0,\dots,N\}$. 
The time step between two observations is $\Delta t\eqdef T/N$.

In \eqref{eq:dis:QR}, we have defined $\mathsf{Q}^\pm_{T,N}$ and $\mathsf{R}^\pm_{T,N}$
which are easily computed from the observations $\Set{\xi_{kT/N}}_{k=0,\dotsc,N}$.

The convergence of $\mathsf{Q}^\pm_{T,N}$  was discussed in \cite{LP} for the OBM
(see also Lemma~\ref{lem:discrete}).
We prove that the speed of convergence is strictly better than $\sqrt{N}$,
meaning that $\sqrt{N}(\mathsf{Q}^+_{T,N} -Q^+_T)\convp[n\rightarrow \infty] 0$.
This result can be extended to the drifted process $\xi$ via the Girsanov
theorem.

Alternatively to computing $\mathsf{R}^\pm_{T,N}$, we may approximate
the local time: 
\begin{itemize}[nolistsep,leftmargin=3em]
    \item The approximation $\mathsf{L}_{T,N}$ to the local time given by \eqref{eq:dis:loctime}
	 is easily implemented.
    \item In \cite{LP,lmt1}, we have also considered two other consistent estimators for the local time. 
For the OBM, they are 
\begin{multline*}
    \mathsf{L}^\dag_{T,N} \\
    \eqdef \frac{-3}{2}\sqrt{\frac{\pi}{2\Delta t}}
    \frac{\sigma_++\sigma_-}{\sigma_+\sigma_-} 
\sum_{k=1}^N    
\big(\ppart{\xi_{kT/N}}-\ppart{\xi_{(k-1)T/N}}\big)
\cdot
\big(\mpart{\xi_{kT/N}}-\mpart{\xi_{(k-1)T/N}}\big)
\end{multline*}
and
\begin{equation*}
    \mathsf{L}^{\times}_{T,N} \\
    \eqdef \frac{4}{\sqrt{2\pi}}\times\frac{1}{\sigma_++\sigma_-}\times\frac{1}{\sqrt{N}}
    \sum_{k=0}^{N-1}\ind{\xi_{kT/N}\xi_{(k+1)T/N+1}<0}.
\end{equation*}
\item The expressions of $\mathsf{L}^\dag_{T,N}$ and $\mathsf{L}^\times_{T,N}$ 
    require $\sigma_+$ and $\sigma_-$, unlike the one of $\mathsf{L}_{T,N}$. 
    To the best of our knowledge, this is the first time that such estimator 
    is considered.
\item  For the Brownian motion, these estimators converge at rate $N^{1/4}$, 
    thanks to the results of \cite{j1}. The proof has not been adapted to OBM 
because of the technical difficulties due to the discontinuity of the
coefficients in $0$.  Anyway we conjecture a rate of $1/4$.  
\item The three different estimators for the local time seem to have all
    comparable accuracy on numerical simulations if the volatility coefficients
    $\sigma_\pm$ are \emph{known}. It looks like estimator $\mathsf{L}^\times_{T,N}$
    is the best one when $\sigma_+ \approx \sigma_-$, whereas
    $\LL_{T,N}$ and $\mathsf{L}^\dag_{T,N} $ look more accurate when $\sigma_+ \neq
    \sigma_-$.

\item Anyway,  if these coefficients are not known, only~$\LL_{T,N}$ can be
    implemented directly, whereas to implement the other estimators we have
    first to estimate~$\sigma_\pm$ from observations of~$\xi$. This problem has
    been thoroughly investigated in~\cite{LP}. As a consequence, a good
    estimation of the local time relies on the estimation of
    $\sigma_\pm$. 
Therefore, we choose to show here the implementation of the
estimator using $\LL_{T,N}$, which does not suffer of this possible
additional problem. 
\end{itemize}

In practice, this means that we are actually showing estimator $\disbeta^\pm_{T,N}$ in \eqref{def:b:qml}
in place of $\beta^\pm_T$.
The numerical results using an approximation of $\beta^\pm_T$ using $\mathsf{Q}^\pm_{T,N}$
and~$\mathsf{L}^\dag_{T,N}$ for the local time are very similar, if $\sigma_\pm$ is supposed to be known.

We do not push further in the present paper the theoretical discussion on the
quality of these discrete time approximations. Some more insights, based on
numerical results, are given in the following section.

\subsection{Implementation and simulation}

The aim of the following section is to show on figures the numerical evidence of the central limit theorems stated in Section \ref{sec:CLT}. We also mean to say something more on the choice of the step of the time grid in relation to the quality of the estimation of the local time. 
The code used for the following simulations have been implemented using the software~\texttt{R}.
We will consider time grids of the type $\{0,T/N,2T/N,\dots,T\}\subset \NN$, with $N\in \NN$, 
and use as approximation of local and occupation time $\LL_{T,N}$ and $\mathsf{Q}^+_{T,N}$.
We actually use $\disbeta^\pm_{T,N}$ instead of $\beta^\pm_{T}$, for large $N$. 
In practice, in some cases
the estimator does not really depend on the local time, but is essentially
determined by the final value of the process and the occupation times. In those
cases, the quality of the discrete time approximation of the local time does
not really matter, and therefore we can take $N$ small. In most cases, however, a
good approximation of the local time is needed in order to observe on
simulations the theoretical central limit behavior expected from Section~\ref{sec:CLT}.
In these cases $N\in\NN$ must be taken large enough.

In what follows, the choice of the parameters is detailed for every figure. The diffusion parameter is taken constant $\sigma_+=\sigma_-=0.01$, the same for all the different simulations. We indicate with $[+]$ and $[-]$ estimation on positive and negative semiaxis, e.g. \textbf{(N1)}$[+]$ stands for \textquote{estimation of $b_+$} in case 
\textbf{N1}.

\begin{figure}[ht!]
    \begin{center}
	\includegraphics[page=1]{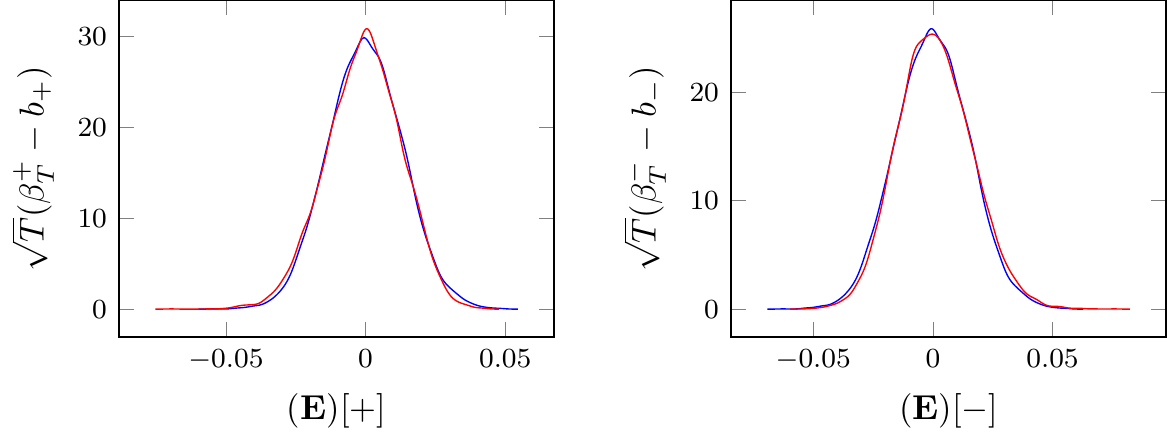}
	\caption{\textbf{(E)}. SDE parameters: $\sigma_\pm=0.01,\, b_-= \num{0.004},\,
    b_+=\num{-0.003}$. Simulation parameters: $T=10^3,\,N=10^5$.
We show both sides (positive and negative) of the estimation, displaying in red the density of 
$\sqrt{T}(\disbeta^\pm_{T,N}-b_\pm)$, in blue its theoretical limit.
The CLT in Proposition \ref{prop:E} is accurate for large~$T$ and time step $T/N$ small, since the quality of the estimation of the local time is key in this case. The limit behavior is Gaussian.}
    \end{center}
\end{figure}

\begin{figure}[ht!]
    \begin{center}
	\includegraphics[page=2]{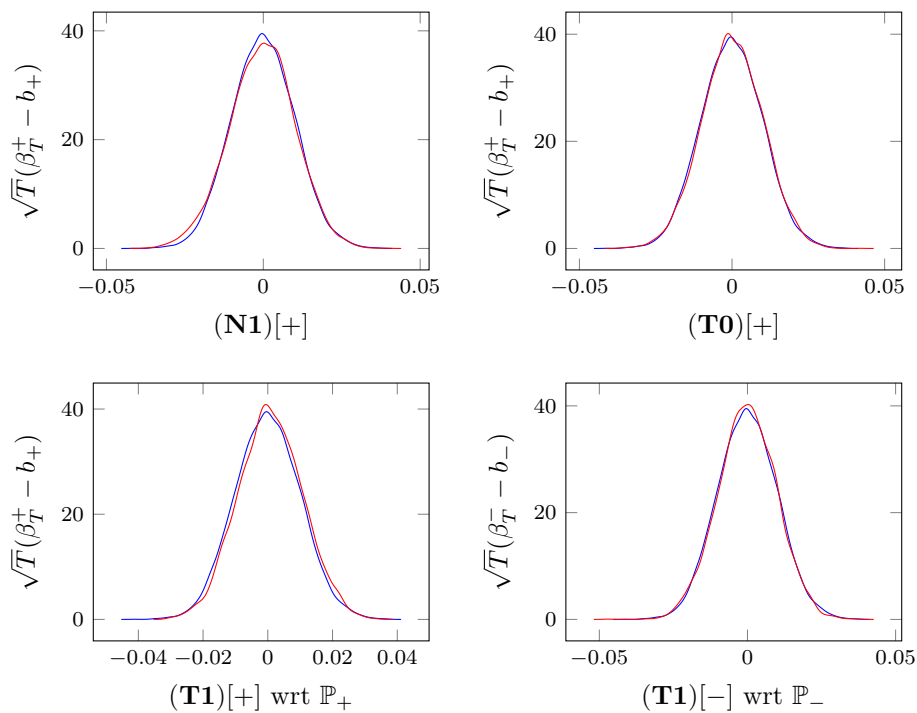}
\caption{
    \textbf{(N1)}$[+]$, \textbf{(T0)}$[+]$, \textbf{(T1)}$[+]$ w.r.t $\PP_+$, \textbf{(T1)}$[-]$ w.r.t $\PP_-$. SDE parameters: $\sigma_\pm=\num{0.01}$;
in case \textbf{N1}: $b_-=\num{0.004},\, b_+=0;$
in case \textbf{T0}: $b_-=\num{0.004},\, b_+=\num{0.006}$;
in case \textbf{T1}: $b_-=\num{-0.004},\, b_+=\num{0.003}$.
Simulation parameters:
$T=10^3,\,N=10^3$.
We display in red the density of $\sqrt{T}(\disbeta_{T,N}^+-b_+)$, in blue its theoretical limit, in cases \textbf{N1} and \textbf{T0}.
We also show in red
$\sqrt{T}(\disbeta_{T,N}^\pm-b_\pm)$ in case~\textbf{T1}, but the density is w.r.t $\PP_\pm$ (cf. \eqref{prop:T1}). This is approximated computing the estimator on trajectories such that $\xi_T$ is larger or respectively smaller than $0$. Again, the blue line shows the theoretical limit density.
In all these cases the CLT is Gaussian and we do not need to have a fine discretization/time grid, since the local time is asymptotically negligible and the quantities which matter in the estimator are $\xi_T$ and the occupation times. This accounts of \eqref{eq:N1}-positive part,
 \eqref{eq:T0}, \eqref{eq:T1p} and~\eqref{eq:T1n}.}
    \end{center}
\end{figure}

\begin{figure}[ht!]
    \begin{center}
	\includegraphics[page=3]{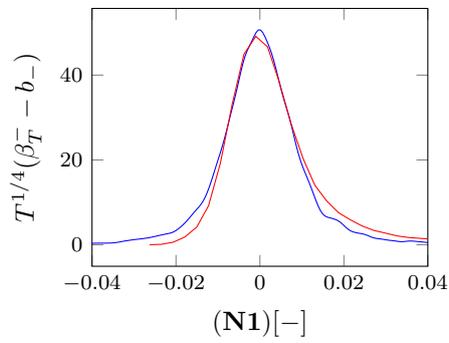}
\caption{
    \textbf{(N1)}$[-]$. SDE parameters: $\sigma_\pm=0.01;$ in case \textbf{N1}: 
$b_-=\num{0.004}; b_+=0$.
Simulation parameters: $T=10^3,N=10^5$.
The CLT in \eqref{eq:N1}-negative part is accurate for large $T$ and time step $T/N$ small, since the quality of the estimation of the local time is key in this case. Remark that in this case (null recurrent), the CLT has speed of convergence $T^{1/4}$ and the limit law is not Gaussian.  This accounts of \eqref{eq:N1}-negative part: we show in red the density of $T^{1/4}(\disbeta_{T,N}^--b_-)$, in blue its limit density.
}
    \end{center}
\end{figure}

\begin{figure}[ht!]
    \begin{center}
	\includegraphics[page=4]{figures_estimation_drift_obm_r1.pdf}
\caption{
    \textbf{(T0)}$[-]$, \textbf{(T1)}$[+]$ w.r.t to $\PP_-$ and 
    \textbf{(T1)}$[-]$ w.r.t to $\PP_+$. SDE parameters: $\sigma_\pm=0.01$;
in case~\textbf{T0}: $b_-=\num{0.004},\, b_+=\num{0.003}$;
 in case~\textbf{T1}: $b_-=\num{-0.003},\, b_+=\num{0.01}$. 
 In case~\textbf{T0}: simulation parameters: $T=\num{20},\, N=10^4$.
 We display in red the density of $\beta^-_T$ . In case~\textbf{T1}: simulation parameters: 
 $T=\num{20},\, N=10^5$. 
 We display in red the density of~$\disbeta_{T,N}^\pm$ w.r.t $\PP_\mp$ (cf. \eqref{prop:T1}).
 This is approximated computing the estimator on trajectories such that $\xi_T$
 is smaller or respectively larger than $0$. In these cases the estimator is
 not consistent, so what we show is not actually a CLT but the convergence of
 the estimators towards the law \eqref{eq:dens:r}, whose density is plotted in blue (cf. results
 \eqref{eq:finite:T0}, \eqref{eq:finite:T1p}, \eqref{eq:finite:T1n}).  This
 convergence is accurate for large~$T$ but also depends on the time step~$T/N$.
 Moreover, we see that the theoretical distribution of $\beta_T^-$ in
 case~\textbf{T1} is almost singular at the origin, and therefore the exact
 behavior near the origin is hard to catch on simulated trajectories. This can
 be improved using different kernels (instead of the Gaussian one) in the
 estimation of the density. This can be easily done with the function
 \textquote{density} in \texttt{R}. The limit behavior is better approximated when
 $b_+$ and $b_-$ have similar magnitude. We choose here to display the case
 $b_-=\num{-0.003}$; $b_+=\num{0.01}$ to mention this critical behavior. Anyway,
 this feature does not really matter in statistical application, because in
 this case the estimator not only is not consistent, but does not even guess
 the correct sign of the parameter. Indeed, we are here in the very critical
 case of a transient process  generated by diverging drift \textbf{T1}.
}    
 \end{center}
\end{figure}

\begin{figure}[ht!]
    \begin{center}
	\includegraphics[page=5]{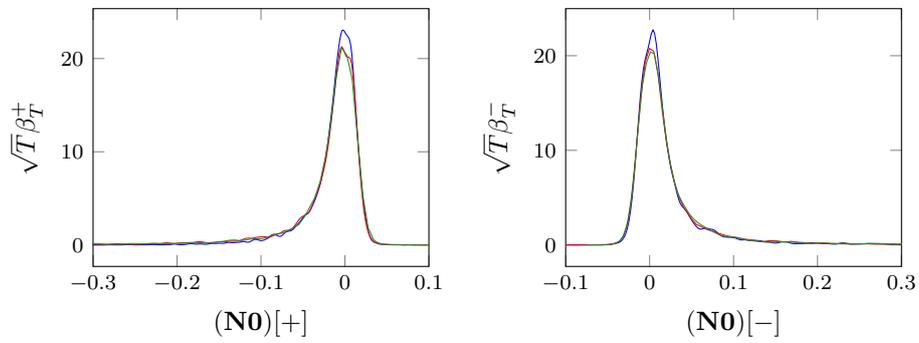}
\caption{
    \textbf{(N0)}. SDE parameters: $\sigma_\pm=0.01$, $b_-=0$, $b_+=0$.
Simulation parameters: $T=10,\,N=10^3$; $T=10^2,\,N=10^4$; $T=10^3,\,N=10^5$.
Differently from before, we do not show the convergence to the scaled limit law  \eqref{explicitdensitynodrift}, but the scaling relation \eqref{eq:scaling:N0}: the blue, red and green lines represent the density of $\sqrt{T}\disbeta_{T,N}^\pm$, for the three different final times. 
We show it on both positive and negative semiaxes.
Because of the estimation of the local time, this also depends on the choice of $N$.}
    \end{center}
\end{figure}

%%%%%%%%%%%%%%%%%%%%%%%%%%%%%%%%%%%%%%%%%%%%%%%%%%%%%%%%%%%%%%%%%%%%%%

%\bibliographystyle{plain}
%\bibliography{biblio-estimator-dobm.bib}

\printbibliography

\end{document}